\newenvironment{conjecture*}[1][]{\textbf{Conjecture #1\hspace{.3em}}}{}
\newenvironment{theorem*}[1]{\textbf{#1}\itshape \hspace{.3em}}{\upshape}
\newenvironment{remark*}[1]{\textbf{#1}\itshape \hspace{.3em}}{\upshape}
\newenvironment{proof}{\textbf{Proof\hspace{.3em}}}{}
\newenvironment{proofsketch}{\textbf{Proof Sketch\hspace{.3em}}}{}
\newtheorem{definition}{Definition}[section]
\newtheorem{theorem}[definition]{Theorem}
\newtheorem{lemma}[definition]{Lemma}
\newtheorem{corollary}[definition]{Corollary}
\newcounter{kpremark}
\newenvironment{remark}{\addtocounter{kpremark}{1}\textbf{Remark \thekpremark}\itshape \hspace{.3em} }{\upshape}
\newcommand{\charstic}{\ensuremath{\mathrm{char\ }}}
\newcommand{\ord}{\ensuremath{\mathrm{ord}}}
\newcommand{\mod}[1]{\ensuremath{\quad(#1)}}
\newcommand{\rgen}{\ensuremath{R_{\mathit{gen}}}}
\newcommand{\funnysum}{\ensuremath{\sum_{\mbox{\parbox[t]{2.5em}{$\scriptstyle rs=Q$\\[-1.3ex] $\scriptstyle s\ne Q$}}}}\hspace{-.6em}}
\newcommand{\acs}{\qquad} % array column space
\begin{document}

\begin{frontmatter}

% Title, authors and addresses

% use the thanksref command within \title, \author or \address for footnotes;
% use the corauthref command within \author for corresponding author footnotes;
% use the ead command for the email address,
% and the form \ead[url] for the home page:
% \title{Title\thanksref{label1}}
% \thanks[label1]{}
% \author{Name\corauthref{cor1}\thanksref{label2}}
% \ead{email address}
% \ead[url]{home page}
% \thanks[label2]{}
% \corauth[cor1]{}
% \address{Address\thanksref{label3}}
% \thanks[label3]{}

%\title{}

% use optional labels to link authors explicitly to addresses:
% \author[label1,label2]{}
% \address[label1]{}
% \address[label2]{}

%\author{}

\title{Tight closure's failure to localize---a self-contained exposition}
\author{Paul Monsky}

\address{Brandeis University, Waltham MA  02454-9110, USA. monsky@brandeis.edu}

\begin{abstract}
We give a treatment of the Brenner-Monsky example based on polynomial algebra and linear algebra. No prior knowledge of tight closure theory, Hilbert-Kunz theory, algebraic geometry or local cohomology is assumed.

%\vspace{2ex}
\end{abstract}

%\begin{keyword}
% keywords here, in the form: keyword \sep keyword

% PACS codes here, in the form: \PACS code \sep code
%\PACS 
%\end{keyword}
\end{frontmatter}

% main text

\section{Introduction---Brenner's insight}
\label{section1}

Holger Brenner and I have given a negative solution to the localization problem for tight closure \cite{1}. The argument involves the Hilbert-Kunz theory of plane curves (and in particular \cite{2}) together with results of Brenner, Hochster and Huneke on test elements and local cohomology.

But most of this machinery, useful as it is for understanding our counterexample, may be dispensed with; in this paper I give a treatment of the example, using only linear algebra and a little local cohomology developed ab initio. The reader doesn't need to know anything about Hilbert-Kunz theory, homological algebra, vector bundles or tight closure. Though the arguments are largely drawn from \cite{1} and \cite{2}, everything is proved here from scratch.

\begin{definition}
\label{def1.1}
If $A$ is a Noetherian domain of characteristic $p>0$, $q$ is a power of $p$, and $I$ is an ideal of $A$, $I^{[q]}$ is the ideal generated by all $v^{q}$, $v$ in $I$.
\end{definition}

\begin{definition}
\label{def1.2}
$u$ is in the tight closure, $I^{*}$, of $I$ if for some $d \ne 0$, $du^{q} \in I^{[q]}$ for all $q$.
\end{definition}

Suppose now that $S \subset A$ is multiplicatively closed, $0 \not\in S$. Then $S^{-1}I$ is an ideal of $S^{-1}A$ and we can form the ideal $(S^{-1}I)^{*}$. The localization problem asks whether $(S^{-1}I)^{*}$ is always equal to $S^{-1}(I)^{*}$. In other words, suppose that $f \in (S^{-1}I)^{*}$. Must there exist an $s$ in $S$ such that $sf \in I^{*}$? After giving positive solutions to the localization problem in some special cases, Brenner realized that the study of a 1-parameter family would give a negative answer provided the family satisfied a certain counter-intuitive condition. I'll explain this insight of Brenner's in the context of a 1-parameter family of projective plane curves. Let $L$ be algebraically closed of characteristic $p$, and let $P$ and $P_{1}$ in $L[x,y,z]$ be homogeneous of the same degree. For $\alpha$ in $L$ set $g_{\alpha}=P+\alpha P_{1}$ and $R_{\alpha}=L[x,y,z]/g_{\alpha}$. $\rgen$ is the ring $L(t)[x,y,z]/P+tP_{1}$. Fix $f$ in $L[x,y,z]$, and an ideal $I$ in $L[x,y,z]$. $I$ generates ideals in each $R_{\alpha}$ and in $\rgen$; abusing language we call all these ideals $I$.

\begin{theorem}[Brenner]
\label{theorem1.3}
 Suppose that:
\begin{enumerate}
\item[(a)] $f \in I^{*}$ in $\rgen$
\item[(b)] There exist infinitely many $\alpha$ in $L$ for which $R_{\alpha}$ is a domain and $f \not\in I^{*}$ in $R_{\alpha}$
\end{enumerate}

Then the localization problem has a negative answer.
\end{theorem}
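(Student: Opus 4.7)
The plan is to treat the whole family simultaneously by working in the integral model $A = L[t,x,y,z]/(P+tP_{1})$. Assuming (without loss) that $\gcd(P,P_{1})=1$ in $L[x,y,z]$, $A$ is a noetherian domain; the natural map $\mathrm{Spec}(A)\to \mathbb{A}^{1}_{L}$ has fibres $\mathrm{Spec}(R_{\alpha})$, and if $S = L[t]\setminus\{0\}$ then $S^{-1}A = \rgen$. I will argue by contradiction: supposing the localization problem has a positive answer for the ideal $I \subset A$, I will derive $f \in I^{*}$ in $R_{\alpha}$ for all but finitely many $\alpha$ with $R_{\alpha}$ a domain, contradicting (b).

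From (a) and the assumed commutation of tight closure with localization at $S$, a routine unwinding of $f \in (S^{-1}I)^{*} = S^{-1}(I^{*})$ yields $\sigma f \in I^{*}$ in $A$ for some $\sigma \in S$: hence there exists a nonzero $d \in A$ with $d(\sigma f)^{q}\in I^{[q]}$ in $A$ for every $q = p^{e}$.

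Reducing modulo $(t-\alpha)$ gives $\bar d\,\sigma(\alpha)^{q}f^{q}\in I^{[q]}$ in $R_{\alpha}$. Outside the finite root set of the one-variable polynomial $\sigma\in L[t]$, $\sigma(\alpha)\in L^{\times}$ is a unit of $R_{\alpha}$, so $\bar d\, f^{q}\in I^{[q]}$ for all $q$. Whenever $R_{\alpha}$ is also a domain and $\bar d\ne 0$ in $R_{\alpha}$, this witnesses $f\in I^{*}$ in $R_{\alpha}$.

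The decisive---and, I expect, the only nontrivial---step is the finiteness of the set $\{\alpha\in L : R_{\alpha}\text{ is a domain and }\bar d=0\text{ in }R_{\alpha}\}$. For such $\alpha$, $(t-\alpha)A$ is a prime ideal of the noetherian domain $A$ of height one (since $t-\alpha$ is a nonzerodivisor), and it contains the nonzero $d$; but a nonzero element of a noetherian domain lies in only finitely many height-one primes, namely the minimal primes of the principal ideal it generates. Combined with the previous paragraph, for all but finitely many $\alpha$ from the infinite set furnished by (b) we obtain $f\in I^{*}$ in $R_{\alpha}$, the desired contradiction.
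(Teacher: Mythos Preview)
Your argument is correct and follows essentially the same route as the paper's proof: pass to the total space $A=L[x,y,z,t]/(P+tP_{1})$, localize at $S=L[t]\setminus\{0\}$ to identify $S^{-1}A$ with $\rgen$, pull back a witness $\sigma\in S$ with $\sigma f\in I^{*}$ in $A$, and then use that the height~1 primes $(t-\alpha)$ containing the fixed nonzero element $d\sigma$ (equivalently, your $d$ and the roots of $\sigma$ taken separately) are finitely many to contradict~(b). The only difference is cosmetic: you treat the roots of $\sigma$ and the primes containing $d$ separately, while the paper bundles them by looking at primes containing $d\sigma$; and you make explicit the hypothesis $\gcd(P,P_{1})=1$ needed for $A$ to be a domain, which the paper uses tacitly (note, though, that ``without loss'' is a slight overstatement---this is really an implicit standing assumption, automatically satisfied in the intended application).
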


\begin{proof}
Take $A=L[x,y,z,t]/P+tP_{1}$, and let $S \subset A$ be $L[t]-\{0\}$. Note that $S^{-1}A$ and $A/(t-\alpha)$ identify with \rgen\ and $R_{\alpha}$ respectively. $I \subset L[x,y,z]$ generates an ideal in $A$ that we again call $I$. Since \rgen identifies with $S^{-1}A$, (a) tells us that $f\in (S^{-1}I)^{*}$ in $S^{-1}A$. Suppose however that $sf\in I^{*}$ for some $s=s(t)$. Then for some $d\ne 0$ in $A$, $ds^{q}f^{q}\in I^{[q]}$ for all $q$. Now by (b) there are infinitely many $\alpha$ in $L$ with $A/(t-\alpha)$ a domain and $f\not\in I^{*}$ in $A/(t-\alpha)$. The corresponding ideals, $(t-\alpha)$, are distinct height 1 primes in $A$, and so cannot all contain $ds$. Fix one such $t-\alpha$ with $ds\not\in (t-\alpha)$. If $\bar{d}$ is the image of $d$ in $A/(t-\alpha)=R_{\alpha}$, then $\bar{d}s(\alpha)^{q}f^{q}\in I^{[q]}$ in $R_{\alpha}$ for all $q$. But $\bar{d}\ne 0$ and $s(\alpha)$ is a non-zero element of $L$. We conclude that $\bar{d}f^{q}\in I^{[q]}$ in $R_{\alpha}$ for all $q$, contradicting the choice of $\alpha$.
\qed
\end{proof}

How is a 1-parameter family satisfying (a) and (b) to be found? In \cite{2}, I had studied a 1-parameter linear family of characteristic 2 plane quartics, obtaining counter-intuitive results suggestive of (a) and (b). (This was done in ignorance of tight closure; my goal was to calculate the ``Hilbert-Kunz multiplicities'' of the curves in this family.)  It turned out that the matrix calculations in \cite{2}, slightly extended and combined with a suitable ``test element theorem'', were exactly what was needed to produce the example. In the following three sections I describe these calculations. The final two sections use some simple algebra to complete the proof.

Throughout, $L$ will be a field of characteristic 2, and $P$ the element $z^{4}+xyz^{2}+(x^{3}+y^{3})z$ of $L[x,y,z]$. If $\alpha \in L$, $\alpha \ne 0$, $g_{\alpha}=P+\alpha x^{2}y^{2}$. It's easy to see that $g_{\alpha}$ is irreducible, so that $R_{\alpha} = L[x,y,z]/g_{\alpha}$ is a domain. Fix a power, $Q$, of 2. $O$ will be the graded $L$-algebra $L[x,y,z]/(x^{4Q},y^{4Q},z^{4Q})$. Multiplication by $g_{\alpha}$ gives a map $O_{j}\rightarrow O_{j+4}$ for each $j$.  The key to establishing (a) and (b) of Theorem \ref{theorem1.3} is the close study of the kernel $N_{6Q-5}$ of $g_{\alpha}: O_{6Q-5}\rightarrow O_{6Q-1}$, both when $\alpha$ is transcendental over $Z/2$ and $Q\ge 2$, and when $\alpha$ is algebraic over $Z/2$ and $Q$ is a certain power of 2 attached to $\alpha$. When $Q\ge 2$, $O_{6Q-5}$ and $O_{6Q-1}$ have dimensions $12Q^{2}-12$ and $12Q^{2}$ and one might expect $N_{6Q-5}=(0)$ for every choice of $Q$. This is true for transcendental $\alpha$ (see Theorem \ref{theorem3.13}) but false for algebraic $\alpha$ (Corollary \ref{corollary4.6}, with $Q$ as in Definition \ref{def4.4}).

\section{Some identities involving $\bm{P}$}
\label{section2}

We begin by defining some elements of $Z/2[x,y]$.

\begin{definition}
\label{def2.1}
If $r$ is a power of 2 then:

\begin{enumerate}
\item[(1)] $A_{r}$ (resp.\ $B_{r}$) is $\sum x^{i}y^{j}$, the sum extending over all pairs $(i,j)$ with $i\equiv j \mod{3}$ and $i+j=4r-2$ (resp.\ $4r-1$).
\item[(2)] $C_{1}=1$ and $C_{2r}=A_{r}^{2}$.
\end{enumerate}
\end{definition}

Each monomial $x^{i}y^{j}$ appearing in $A_{2r}$ has $i\equiv j \mod{2}$. Those monomials with $i$ (and $j$) even sum to $B_{r}^{2}$, while those with $i$ (and $j$) odd sum to $xyA_{r}^{2}$. So $A_{2r}=B_{r}^{2}+xyA_{r}^{2}$.  A similar argument shows that $B_{2r}=(x^{3}+y^{3})A_{r}^{2}$.

\begin{lemma}
\label{lemma2.2}
The following identities hold in $Z/2[x,y,z]$: When $Q$ is a power of 2, $z^{4Q}=A_{Q}z^{2}+B_{Q}z+\sum_{rs=Q}(C_{r}P)^{s}$.
\end{lemma}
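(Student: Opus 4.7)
The plan is to prove this by induction on $Q$. For the base case $Q=1$, one has $A_1 = xy$, $B_1 = x^3+y^3$, $C_1 = 1$, and the sum $\sum_{rs=1}(C_r P)^s$ is just $P$, so the identity becomes $z^4 = xyz^2 + (x^3+y^3)z + P$, which is merely the definition of $P$ rearranged (using characteristic 2).

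For the inductive step, assume the identity holds for $Q$; I would square both sides (Frobenius is additive in characteristic $2$) to obtain
\[
z^{8Q} = A_Q^2 z^4 + B_Q^2 z^2 + \sum_{rs=Q}(C_r P)^{2s}.
\]
Then substitute $z^4 = A_1 z^2 + B_1 z + P$ (the base case) into the first term, collect the coefficients of $z^2$ and $z$, and apply the two identities established just before the lemma: $A_{2Q} = B_Q^2 + xy A_Q^2 = A_Q^2 A_1 + B_Q^2$ and $B_{2Q} = (x^3+y^3)A_Q^2 = A_Q^2 B_1$. This should yield
\[
z^{8Q} = A_{2Q} z^2 + B_{2Q} z + A_Q^2 P + \sum_{rs=Q}(C_r P)^{2s}.
\]

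The remaining task is to recognize the tail $A_Q^2 P + \sum_{rs=Q}(C_r P)^{2s}$ as exactly $\sum_{r's'=2Q}(C_{r'} P)^{s'}$. Here I would use that $2Q$ is a power of $2$, so every divisor $s'$ of $2Q$ is a power of $2$; the case $s'=1$ forces $r'=2Q$ and contributes $C_{2Q} P = A_Q^2 P$, while the case $s'\ge 2$ forces $s'=2s$ with $s\mid Q$, and setting $r'=r=Q/s$ matches $(C_{r'}P)^{s'}=(C_r P)^{2s}$ term by term. This bijection between divisor pairs is the only moving part I expect to need care, but it is straightforward once one notes that $2Q$ has only one odd divisor.

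I do not anticipate a main obstacle; the combinatorial rearrangement of the sum is the only spot where one could slip, and the two identities $A_{2Q} = B_Q^2 + xyA_Q^2$, $B_{2Q} = (x^3+y^3)A_Q^2$ have already been supplied by the author.
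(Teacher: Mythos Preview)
Your proposal is correct and follows essentially the same approach as the paper: induct on $Q$, handle $Q=1$ from the definition of $P$, and for the inductive step square the identity, substitute $z^4 = xyz^2 + (x^3+y^3)z + P$, and invoke the relations $A_{2Q}=B_Q^2+xyA_Q^2$, $B_{2Q}=(x^3+y^3)A_Q^2$, $C_{2Q}=A_Q^2$. Your explicit unpacking of the divisor bijection for the $\sum_{rs=2Q}(C_rP)^s$ term is more detailed than the paper's terse ``using the identities following Definition \ref{def2.1}'', but the argument is the same.
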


\begin{proof}
Since $A_{1}=xy$, $B_{1}=x^{3}+y^{3}$ and $C_{1}=1$, the case $Q=1$ follows from the definition of $P$. In general we argue by induction, squaring the identity for $Q$, replacing $z^{4}$ by $xyz^{2} + (x^{3}+y^{3})z+P$, and using the identities following Definition \ref{def2.1}.
\qed
\end{proof}

Now let $L$, $Q$ and $O=L[x,y,z]/(x^{4Q}, y^{4Q}, z^{4Q})$ be as in the last section.

\begin{definition}
\label{def2.3}
$R_{Q}$ is the element $A_{Q}z^{2}+B_{Q}z$ of $O_{4Q}$. $\Delta$ in $O_{6Q-1}$ is $\sum x^{i}y^{j}z^{k}\!$, the sum extending over all triples $(i,j,k)$ with $i+j+k=6Q-1$, $i \not\equiv j \mod{3}$ and $k=1$ or 2.
\end{definition}

\begin{lemma}
\label{lemma2.4}
Suppose $i+j=2Q-1$. Then, in $O$, $(x^{i}y^{j}+x^{j}y^{i})R_{Q}$ is 0 if $i\equiv j \mod{3}$ and is $\Delta$ otherwise.
\end{lemma}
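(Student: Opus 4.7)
The plan is to set $M = x^i y^j + x^j y^i$ and expand $M R_Q = M A_Q z^2 + M B_Q z$ in $O$, reading off the coefficient of each surviving monomial $x^I y^J z^k$ for $k \in \{1, 2\}$. Each pair $(a, b)$ indexing $A_Q$ (respectively $B_Q$) contributes $x^{i+a} y^{j+b} + x^{j+a} y^{i+b}$, so working modulo $2$, the coefficient of $x^I y^J z^k$ for $I + J + k = 6Q - 1$ and $0 \leq I, J \leq 4Q - 1$ equals
\[
c \;=\; \mathbf{1}\{I \geq i,\, J \geq j,\, I - J \equiv i - j \pmod 3\} \;+\; \mathbf{1}\{I \geq j,\, J \geq i,\, I - J \equiv j - i \pmod 3\}.
\]
Here each congruence on $I - J$ expresses the requirement $a \equiv b \pmod 3$ from the index set of $A_Q$ or $B_Q$, applied to $(a, b) = (I - i, J - j)$ for the first indicator and $(I - j, J - i)$ for the second, while the pair of inequalities enforces $a, b \geq 0$.

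The analysis now splits on $i - j \pmod 3$. If $i \equiv j \pmod 3$, both congruences collapse to $I \equiv J \pmod 3$; when the two sets of inequalities both hold the two indicators coincide and their sum is $0$. If $i \not\equiv j \pmod 3$, then $i - j$ and $j - i$ are the two nonzero residues modulo $3$, so exactly one congruence holds precisely when $I \not\equiv J \pmod 3$, and whenever the matching inequalities also hold we get $c = \mathbf{1}\{I \not\equiv J \pmod 3\} = [x^I y^J z^k]\Delta$. For $k = 1$ the constraint $I + J = 6Q - 2$ with $I, J \leq 4Q - 1$ forces $I, J \geq 2Q - 1 \geq \max(i, j)$, so every inequality holds automatically and the lemma is immediate in this half.

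The main obstacle is the $k = 2$ case at its boundary, where $I + J = 6Q - 3$ allows $I = 2Q - 2$ or $J = 2Q - 2$, and when $(i, j) = (2Q - 1, 0)$ or $(0, 2Q - 1)$ one of the index-inequalities breaks. I plan to dispose of each such boundary position by explicit modular computation: the two residues $(I - J) - (i - j)$ and $(I - J) - (j - i)$ simplify to $\pm 4Q$ and $\pm 2$ modulo $3$, both nonzero because $Q$ is a power of $2$ and hence $Q \not\equiv 0 \pmod 3$. So at each boundary one indicator vanishes because of a broken inequality and the other vanishes because of a failed congruence, giving $c = 0$. A parallel check of $I - J \pmod 3$ at these points shows that $I \equiv J \pmod 3$ there exactly when $(i, j)$ falls in the case $i \not\equiv j \pmod 3$ (both reduce to $Q \equiv 1 \pmod 3$), so $[x^I y^J z^2]\Delta = 0$ at those boundary points and matches $c = 0$. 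Assembling everything gives $M R_Q = 0$ when $i \equiv j \pmod 3$ and $M R_Q = \Delta$ otherwise, which is the lemma.
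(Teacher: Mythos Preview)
Your proof is correct, but it takes a substantially different route from the paper's.

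The paper's argument is structural: it first observes that $(x^{3}+y^{3})A_{Q}$ and $(x^{3}+y^{3})B_{Q}$ lie in $(x^{4Q},y^{4Q})$, so $x^{3}R_{Q}=y^{3}R_{Q}$ in $O$. This single identity immediately forces $x^{i}y^{j}R_{Q}$ (for $i+j=2Q-1$) to depend only on $i\bmod 3$, which disposes of the case $i\equiv j\ (3)$ and reduces the case $i\not\equiv j\ (3)$ to the one explicit computation $(x^{Q-1}y^{Q}+x^{Q}y^{Q-1})R_{Q}=\Delta$. No boundary bookkeeping is needed.

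Your approach instead extracts each monomial coefficient directly and matches it against the target, splitting on the residue of $i-j$ and on whether the index inequalities $I\ge i,\,J\ge j$ (and their swap) hold. The delicate point is the $k=2$ edge where $I$ or $J$ can drop to $2Q-2$; you handle it by checking that at those positions both congruence conditions fail (since $(I-J)-(i-j)$ and $(I-J)-(j-i)$ reduce to $\pm 2$ and $\pm 4Q$, neither divisible by $3$), so $c=0$ there, and simultaneously the $\Delta$-coefficient vanishes because $I\equiv J\ (3)$ exactly when $i\not\equiv j\ (3)$ at those points. That reasoning is sound, and together with the observation that away from this edge all four inequalities hold automatically (since $I,J\ge 2Q-1\ge\max(i,j)$ for $k=1$, and $I,J\ge 2Q-2\ge\max(i,j)$ unless $\max(i,j)=2Q-1$ for $k=2$), it closes the argument.

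What each approach buys: the paper's identity $x^{3}R_{Q}=y^{3}R_{Q}$ is a reusable structural fact that makes the lemma a two-line consequence, at the cost of first verifying that $(x^{3}+y^{3})A_{Q},(x^{3}+y^{3})B_{Q}\in(x^{4Q},y^{4Q})$. Your method is entirely self-contained coefficient chasing and needs no auxiliary identity, but pays for it with the boundary case analysis. One small expository point: your sentence ``one indicator vanishes because of a broken inequality and the other because of a failed congruence'' slightly undersells what you actually proved---you showed \emph{both} congruences fail at the boundary, which is the cleaner reason both indicators vanish.
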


\begin{proof}
Definition \ref{def2.1} shows that $(x^{3}+y^{3})A_{Q}$ and $(x^{3}+y^{3})B_{Q}$ both lie in  $(x^{4Q},y^{4Q})$. So $x^{3}R_{Q}=y^{3}R_{Q}$ in $O$. It follows immediately that when $i+j=2Q-1$ then $x^{i}y^{j}R_{Q}$ only depends on $i \bmod{3}$. This gives the first part of Lemma \ref{lemma2.4} and shows that when $i \not\equiv j \mod{3}$, $(x^{i}y^{j}+x^{j}y^{i})R_{Q}=(x^{Q-1}y^{Q}+x^{Q}y^{Q-1})R_{Q}$. But this last element is easily seen to be $\Delta$.
\qed
\end{proof}

\pagebreak
\begin{theorem}
\label{theorem2.5}
\hspace{2em} %so there's a linebreak

\begin{enumerate}
\item[(1)] In $O$, $R_{Q}=\sum_{rs=Q}(C_{r}P)^{s}$
\item[(2)] Suppose that $i+j=2Q-1$. Then in $O$, $(x^{i}y^{j}+x^{j}y^{i})P^{Q}=\varepsilon\Delta+(x^{i}y^{j}+x^{j}y^{i})\funnysum(C_{r}P)^{s}$, where $\varepsilon$ is 0 if $i\equiv j \mod{3}$ and is 1 otherwise.
\end{enumerate}
\end{theorem}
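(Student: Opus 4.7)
The plan is to derive both assertions as immediate consequences of Lemma \ref{lemma2.2} and Lemma \ref{lemma2.4}, by combining them with the vanishing of $z^{4Q}$ in $O$.

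For (1), I would start from Lemma \ref{lemma2.2}, which gives the identity $z^{4Q} = A_Q z^2 + B_Q z + \sum_{rs=Q}(C_r P)^s$ in $Z/2[x,y,z]$. This identity holds a fortiori in $O$, where $z^{4Q}$ is $0$ by definition of $O$. The right-hand side then becomes $R_Q + \sum_{rs=Q}(C_r P)^s = 0$, and since $\charstic L = 2$, this rearranges to $R_Q = \sum_{rs=Q}(C_r P)^s$ in $O$, which is (1).

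For (2), I would split off the term in the sum coming from $s = Q$, $r = 1$. Since $C_1 = 1$, that term is exactly $P^Q$, so (1) rewrites as
\[
P^Q = R_Q + \funnysum (C_r P)^s
\]
in $O$ (again using characteristic 2 to freely move terms across equalities). Multiplying both sides by $x^iy^j + x^jy^i$ with $i+j = 2Q-1$ gives
\[
(x^iy^j + x^jy^i)P^Q = (x^iy^j + x^jy^i)R_Q + (x^iy^j + x^jy^i)\funnysum (C_r P)^s.
\]
Finally, by Lemma \ref{lemma2.4}, $(x^iy^j + x^jy^i)R_Q$ equals $0$ when $i\equiv j \mod{3}$ and equals $\Delta$ otherwise, which is precisely $\varepsilon \Delta$ as defined in the statement.

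There is no real obstacle here: the content lies entirely in the preceding lemmas, and the theorem is just a bookkeeping step that pulls the $s=Q$ summand out of the identity and identifies what remains via Lemma \ref{lemma2.4}. The one thing to be careful about is that the identity of Lemma \ref{lemma2.2} is stated in $Z/2[x,y,z]$, so one must note that it descends to $L[x,y,z]$ (hence to $O$) since $L$ has characteristic $2$.
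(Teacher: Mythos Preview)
Your proof is correct and follows exactly the same approach as the paper: use Lemma~\ref{lemma2.2} together with $z^{4Q}=0$ in $O$ to obtain (1), then isolate the $s=Q$ term (using $C_1=1$) to write $P^Q=R_Q+\funnysum(C_rP)^s$, multiply by $x^iy^j+x^jy^i$, and apply Lemma~\ref{lemma2.4} to get (2).
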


\begin{proof}
Combining Lemma \ref{lemma2.2} with the definition of $R_{Q}$, noting that $z^{4Q}=0$ in $O$, we get (1). Since $C_{1}=1$, $P^{Q}=R_{Q}+\funnysum (C_{r}P)^{s}$. Multiplying by $x^{i}y^{j}+x^{j}y^{i}$ and applying Lemma \ref{lemma2.4} gives (2).
\qed
\end{proof}

\begin{lemma}
\label{lemma2.6}
Suppose $i+j=2Q-1$. The co-efficient of $x^{4Q-2}y^{Q-2}$ in $(x^{i}y^{j}+x^{j}y^{i})(x^{3}+y^{3})^{Q-1}$ is 0 if $i\not\equiv j \mod{3}$ and is 1 otherwise.
\end{lemma}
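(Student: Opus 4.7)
\begin{proofsketch}
The plan is to expand the left-hand side explicitly, reduce $(x^{3}+y^{3})^{Q-1}$ using characteristic~2, and count which $k$ yield the target monomial $x^{4Q-2}y^{Q-2}$.

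First I would invoke Lucas's theorem: since $Q$ is a power of~$2$, $Q-1$ has all $1$'s in its binary expansion, so every $\binom{Q-1}{k}$ is odd. Hence $(x^{3}+y^{3})^{Q-1}=\sum_{k=0}^{Q-1}x^{3k}y^{3(Q-1-k)}$, and extracting the coefficient of $x^{4Q-2}y^{Q-2}$ in $(x^{i}y^{j}+x^{j}y^{i})(x^{3}+y^{3})^{Q-1}$ becomes a bookkeeping exercise.

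For the summand $x^{i}y^{j}\cdot x^{3k}y^{3(Q-1-k)}$, matching exponents with $x^{4Q-2}y^{Q-2}$ forces $3k=4Q-2-i$; requiring $k\in[0,Q-1]$ together with $i\le 2Q-1$ yields the two conditions $i\equiv 4Q-2\equiv Q+1\mod{3}$ and $Q+1\le i\le 2Q-1$. By the obvious symmetry, the summand $x^{j}y^{i}\cdot x^{3k}y^{3(Q-1-k)}$ contributes iff $j\equiv Q+1\mod{3}$ and $Q+1\le j\le 2Q-1$, which, via $i+j=2Q-1$, reads $i\equiv Q+1\mod{3}$ and $0\le i\le Q-2$.

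To finish I would use $i+j=2Q-1$ to rewrite $i\equiv j\mod{3}$ as $2i\equiv 2Q-1\mod{3}$, equivalently $i\equiv Q+1\mod{3}$ (multiply by $2$, using $2\cdot 2\equiv 1\mod{3}$). The two admissible intervals $[0,Q-2]$ and $[Q+1,2Q-1]$ are disjoint, and the only omitted values $i\in\{Q-1,Q\}$ fail this congruence outright. So precisely when $i\equiv j\mod{3}$, exactly one of the two summands contributes, giving coefficient $1$; otherwise neither contributes, giving $0$. The only real care needed is verifying disjointness of the two admissible intervals, since a double count in characteristic $2$ would collapse to $0$ and spuriously answer $0$ in place of $1$.
\end{proofsketch}
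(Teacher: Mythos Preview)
Your proof is correct and follows essentially the same route as the paper's: both expand $(x^{3}+y^{3})^{Q-1}$ as the full sum $\sum_{k}x^{3k}y^{3(Q-1-k)}$, determine when each of the two summands $x^{i}y^{j}$ and $x^{j}y^{i}$ contributes (the paper phrases the condition as $i\ge Q+1$ versus $j\ge Q+1$, you as $i\in[Q+1,2Q-1]$ versus $i\in[0,Q-2]$), and then observe that the excluded pair $\{i,j\}=\{Q-1,Q\}$ fails the congruence so that exactly one summand fires when $i\equiv j\ \mathrm{mod}\ 3$. The only cosmetic differences are your explicit appeal to Lucas and your rewriting of $i\equiv j\ \mathrm{mod}\ 3$ as $i\equiv Q+1\ \mathrm{mod}\ 3$.
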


\begin{proof}
The first assertion is clear. For the second note that the co-efficient in question is the sum of the co-efficients of $x^{4Q-2-i}y^{Q-2-j}$ and $x^{4Q-2-j}y^{Q-2-i}$ in $(x^{3}+y^{3})^{Q-1}=x^{3Q-3}+x^{3Q-6}y^{3} + \cdots + y^{3Q-3}$.  The first of these co-efficients is 1 when $i$ is both $\ge Q+1$ and $\equiv j \mod{3}$, while the second is 1 when $j$ is both $\ge Q+1$ and $\equiv i\mod{3}$. Since precisely one of $i$ and $j$ is $\ge Q+1$ (they cannot be $Q$ and $Q-1$) we get the lemma.
\qed
\end{proof}

\section{The spaces $\bm{X}$ and $\bm{Y}$---the case of transcendental $\bm{\alpha}$}
\label{section3}

$L$, $P$, and $g_{\alpha}$ are as in the final paragraph of the introduction.  $Q\ge 2$ is a power of 2. $O$ is the graded $L$-algebra $L[x,y,z]/(x^{4Q},y^{4Q},z^{4Q})$, and $N_{6Q-5}$ is the kernel of $g_{\alpha}: O_{6Q-5}\rightarrow O_{6Q-1}$.

\begin{definition}
\label{def3.1}
\hspace{2em} % so there will be a linebreak

\begin{enumerate}
\item[(1)] $[i,j]=x^{i}y^{j}+x^{j}y^{i}$
\item[(2)] $X \subset O_{6Q-5}$ is spanned by the $[i,j]P^{k}$ with $i+j+4k=6Q-5$ and $k=0,1,\ldots ,Q-1$
\item[(3)] $Y \subset O_{6Q-1}$ is spanned by the $[i,j]P^{k}$ with $i+j+4k=6Q-1$ and $k=0,1,\ldots ,Q-1$
\end{enumerate}
\end{definition}

\begin{theorem}
\label{theorem3.2}
Let $(Y,\Delta)$ be the subspace of $O_{6Q-1}$ spanned by $Y$ and the element $\Delta$ of Definition \ref{def2.3}. Then $g_{\alpha}\cdot X \subset (Y,\Delta)$.
\end{theorem}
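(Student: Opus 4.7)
The plan is to verify the inclusion on generators. A spanning element of $X$ has the form $[i,j]P^k$ with $i+j+4k = 6Q-5$ and $0 \le k \le Q-1$. Since $g_\alpha = P + \alpha x^2 y^2$, I write
\[
g_\alpha \cdot [i,j]P^k \;=\; [i,j]P^{k+1} \;+\; \alpha\,[i+2,\,j+2]\,P^k.
\]
The second summand has $(i+2)+(j+2)+4k = 6Q-1$ with $k \le Q-1$, so it lies in $Y$ by the very definition of $Y$. For the first summand, if $k \le Q-2$ then $k+1 \le Q-1$ and again $[i,j]P^{k+1} \in Y$. So the only case requiring work is $k = Q-1$, in which case the first summand is $[i,j]P^Q$ with $i+j = 2Q-1$.

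For that remaining case, I would invoke Theorem \ref{theorem2.5}(2), which says that in $O$,
\[
[i,j]\,P^Q \;=\; \varepsilon \Delta \;+\; [i,j]\funnysum (C_r P)^s,
\]
so $\varepsilon \Delta \in (Y,\Delta)$ automatically, and it remains to show each term $[i,j]\,C_r^{\,s} P^s$ with $rs = Q$, $s < Q$, lies in $Y$. Since $r \ge 2$ (as $s \ne Q$ and $Q$ is a power of $2$) we have $s \le Q/2 \le Q-1$ using $Q \ge 2$, so the exponent of $P$ is in the allowed range for $Y$; what remains is to exhibit $[i,j]\,C_r^{\,s}$ (as an element of $O$) as an $L$-linear combination of brackets $[i',j']$ with $i'+j' = 6Q-1-4s$.

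The key observation for this is that $C_r$ is symmetric in $x$ and $y$: indeed $C_1 = 1$, and $C_{2r} = A_r^2$ is symmetric because $A_r = \sum x^i y^j$ (summed over $i \equiv j \bmod 3$, $i+j = 4r-2$) is preserved under swapping $x$ and $y$. Hence $[i,j]\,C_r^{\,s}$ is a symmetric polynomial in $x,y$. Its total degree equals $(2Q-1) + s(4r-4) = 6Q-1-4s$, which is odd, so no monomial $x^a y^a$ can appear. Grouping the monomials of this polynomial into symmetric pairs (and discarding any $x^a y^b$ with $a \ge 4Q$ or $b \ge 4Q$, which vanish in $O$) writes $[i,j]\,C_r^{\,s}$ as a sum $\sum c_{a,b}[a,b]$ with $a+b = 6Q-1-4s$ and $a,b < 4Q$. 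Multiplying by $P^s$ expresses $[i,j]\,C_r^{\,s} P^s$ as a sum of generators of $Y$, completing the argument.

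I expect the main obstacle to be the symmetry-plus-parity step at the end: one must recognize that $C_r$ inherits symmetry from $A_r$, and that the odd total degree of $[i,j]\,C_r^{\,s}$ is what guarantees the decomposition into bracket generators with no leftover diagonal monomial (which in characteristic $2$ would not be a bracket). Everything else is just degree bookkeeping combined with Theorem \ref{theorem2.5}(2).
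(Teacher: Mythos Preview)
Your proposal is correct and follows essentially the same approach as the paper: split $g_\alpha$ into its $x^2y^2$ and $P$ parts, observe that only the case $k=Q-1$ of the $P$-part is nontrivial, invoke Theorem \ref{theorem2.5}(2), and then note that $[i,j]\,C_r^{\,s}$ is a symmetric form in $x,y$ of odd degree $6Q-1-4s$, hence a combination of brackets. Your write-up simply makes explicit the symmetry-of-$C_r$ and odd-degree reasoning that the paper compresses into a single sentence.
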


\begin{proof}
Evidently $(x^{2}y^{2})\cdot X \subset Y$. It remains to show that $P\cdot X \subset (Y,\Delta)$. This will follow if we can prove that $P\cdot [i,j]\cdot P^{Q-1}\in (Y,\Delta)$ whenever $i+j=2Q-1$. By Theorem \ref{theorem2.5} it suffices to show that each $[i,j]C_{r}^{s}P^{s}$ is in $Y$ when $rs=Q$ and $s<Q$. This is easy: $[i,j]\cdot C_{r}^{s}$ is a symmetric form in $x$ and $y$ of (odd) degree $(2Q-1)+s(4r-4)=6Q-1-4s$.
\qed
\end{proof}

The $[i,j]P^{k}$ with $i+j+4k=6Q-5$, $i,j< 4Q$, $k<Q$, and $i$ odd evidently span $X$. Noting that each such element has the form $(x^{i}y^{j}+x^{j}y^{i})z^{4k}+$ terms of lower degree in $z$, with $i$ odd and $j$ even, we see that these elements are a basis of $X$. One constructs a basis of $Y$ similarly and finds that $\dim X=\dim Y$; both dimensions are in fact $\frac{3Q^{2}}{2}$. A basis of $(Y,\Delta)$ is given by the $[i,j]P^{k}$ with $i+j+4k=6Q-1$, $i,j<4Q$, $k<Q$, $i$ odd, together with $\Delta$.

Note that the kernel of the map $g_{\alpha}: X\rightarrow (Y,\Delta)$ of Theorem \ref{theorem3.2} is just $N_{6Q-5}\cap X$. We'll get a better understanding of this space by replacing $X$ and $(Y,\Delta)$ by certain quotients.

\begin{definition}
\label{def3.3}
$D$ is the graded $L$-algebra $L[x,y]/(x^{4Q},y^{4Q})$. $E_{i}$, $1\le i \le Q$, is the element $[2i-1,2Q-2i]$ of $D_{2Q-1}$. $F_{i}$, $1\le i \le Q$, is the element $x^{2Q}y^{2Q}E_{i}$ of $D_{6Q-1}$. $D_{2Q-1}^{\mathit{sym}}$ and $D_{6Q-1}^{\mathit{sym}}$ are the $Q$-dimensional subspaces of $D_{2Q-1}$ and $D_{6Q-1}$ spanned by the $E_{i}$ and $F_{i}$ respectively.
\end{definition}

\begin{definition}
\label{def3.4}
$X\rightarrow D_{2Q-1}^{\mathit{sym}}$ is the map taking $[i,j]P^{k}$ to 0 when $k<Q-1$, and to $[i,j]$ when $k=Q-1$ (so that $i+j=2Q-1$).
\end{definition}

\begin{definition}
\label{def3.5}
$Y\rightarrow D_{6Q-1}^{\mathit{sym}}\oplus L$ takes\\
\vspace{-6ex}

\begin{eqnarray*}
[i,j]P^{k} & \mbox{ to } & \left([i,j](\alpha x^{2}y^{2})^{k},0\right)\\
\Delta & \mbox{ to } & (0,1)
\end{eqnarray*}
\end{definition}

Our descriptions of bases of $X$ and $(Y,\Delta)$ show that these $L$-linear maps are well-defined. They are evidently onto.

\begin{lemma}
\label{lemma3.6}
Let $X_{0}$ and $Y_{0}$ be the kernels of the maps of Definitions \ref{def3.4} and \ref{def3.5}. Then $g_{\alpha}$ maps $X_{0}$ bijectively to $Y_{0}$.
\end{lemma}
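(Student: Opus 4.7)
My plan is to prove the lemma by establishing three things in order: the dimension equality $\dim X_0=\dim Y_0$, the inclusion $g_\alpha(X_0)\subseteq Y_0$, and injectivity of $g_\alpha|_{X_0}$; bijectivity then follows.

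For the dimensions, the map of Definition~\ref{def3.4} is surjective since $[2m-1,2Q-2m]P^{Q-1}\mapsto E_m$, so $\dim X_0=\tfrac{3Q^{2}}{2}-Q$. The map of Definition~\ref{def3.5} is surjective because the $k=0$ basis elements $[i,j]$ of $Y$ hit the $F_m$'s while $\Delta\mapsto(0,1)$, so $\dim Y_0=\bigl(\tfrac{3Q^{2}}{2}+1\bigr)-(Q+1)=\tfrac{3Q^{2}}{2}-Q$. For the inclusion, I take a basis element $[i,j]P^k$ of $X_0$ (so $k\le Q-2$) and expand
\[
  g_\alpha\cdot[i,j]P^k\;=\;[i,j]P^{k+1}+\alpha\,[i+2,j+2]P^k .
\]
Since $k+1<Q$, each non-vanishing summand is a basis element of $Y$, and the map of Definition~\ref{def3.5} sends both to $\alpha^{k+1}[i+2(k+1),\,j+2(k+1)]$ in $D_{6Q-1}^{\mathit{sym}}$. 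These cancel in characteristic~$2$, hence $g_\alpha(X_0)\subseteq Y_0$.

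For injectivity, write $u\in X_0$ as $u=\sum c_{k,i}\,[i,\,6Q-5-4k-i]P^k$ with the convention $c_{k,i}=0$ outside the $X_0$-basis range (so in particular $c_{Q-1,i}=0$). Expanding $g_\alpha u$ in the basis of $Y$ and reading off the coefficient of $[i,\,6Q-5-4k-i]P^{k+1}$ yields, for every $(k,i)$ in the $X_0$-basis, the relation
\[
  c_{k,i}+\alpha\, c_{k+1,i-2}=0 ;
\]
the conditions $i,j<4Q$ in the $X_0$-basis description are exactly what guarantees that $(k+1,i)$ is a legitimate $Y$-basis index, so the relation is genuine. If $g_\alpha u=0$, downward induction on $k$ then forces every $c_{k,i}$ to vanish: the base case $k=Q-2$ gives $c_{Q-2,i}=\alpha c_{Q-1,i-2}=0$ by convention, and the step $c_{k,i}=\alpha c_{k+1,i-2}=0$ is immediate. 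Combined with the dimension count, this gives bijectivity.

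The one point I expect to need care with is the boundary values $i\in\{2Q-3-4k,\,4Q-1\}$ of the $X_0$-basis, where the second summand $\alpha[i+2,j+2]P^k$ happens to vanish in $O$. These lost contributions would have gone to $Y$-basis indices $(k,i+2)$ that are themselves absent (one exponent would reach $4Q$), so no coefficient equation is actually disturbed and the triangular structure of the recurrence remains intact.
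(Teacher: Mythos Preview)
Your proof is correct and follows the paper's overall structure: a dimension count (both kernels have dimension $\tfrac{3Q^2}{2}-Q$), the inclusion $g_\alpha(X_0)\subset Y_0$ via the identity $g_\alpha[i,j]P^k=[i,j]P^{k+1}+\alpha[i+2,j+2]P^k$ and the observation that both summands map to the same element of $D_{6Q-1}^{\mathit{sym}}$, and then injectivity. The inclusion argument is essentially identical to the paper's.

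Where you diverge is in the injectivity step. You expand $g_\alpha u$ in the $Y$-basis, extract the coefficient of each $[i,j]P^{k+1}$, and obtain the triangular recurrence $c_{k,i}+\alpha c_{k+1,i-2}=0$, which you then solve by downward induction on $k$. The paper instead argues in one line using the $z$-degree: any nonzero $u\in X_0$ has the form $A(x,y)z^r+\cdots$ with $A\ne 0$ in $D$ and $r\le 4(Q-2)<4Q-4$, so $g_\alpha u=A(x,y)z^{r+4}+\cdots$ with $r+4<4Q$, which is nonzero in $O$. This is really the same triangularity you are exploiting, filtered by the top power of $z$ rather than by the index $k$; the paper's version just avoids the index bookkeeping and the boundary discussion. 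Your final paragraph about the boundary indices is slightly imprecise (the extremal values of $i$ depend on whether $k<Q/2$ or $k\ge Q/2$), but this does not affect the argument: the recurrence $c_{k,i}+\alpha c_{k+1,i-2}=0$ holds for every $(k,i)$ in the $X_0$-range regardless, with the convention $c_{k+1,i-2}=0$ outside the range, and that is all the induction needs.
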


\begin{proof}
Our description of a basis of $X$ shows that $X_{0}$ is spanned by the $[i,j]P^{k}$ with $i+j+4k=6Q-5$ and $k=0,1,\ldots ,Q-2$. So a non-zero element, $u$, of $X_{0}$ has the form $A(x,y)z^{k}+$ terms of lower degree in $z$, where $A(x,y)\ne 0$ in $D$ and $k<4Q-4$. Then $g_{\alpha}u=A(x,y)z^{k+4}+\cdots \ne 0$; we conclude that $g_{\alpha}$ maps $X_{0}$ injectively. If $k\le Q-2$, then $g_{\alpha}[i,j]\cdot P^{k}=[i,j]P^{k+1}+\alpha[i+2,j+2]P^{k}$. Both terms on the right map to $\left([i,j](\alpha x^{2}y^{2})^{k+1},0\right)$ under the map of Definition \ref{def3.5}, and we conclude that $g_{\alpha}(X_{0})\subset Y_{0}$. Note also that the maps of Definition \ref{def3.4} and \ref{def3.5} are onto, that $\dim X=\dim Y$, and that $\dim D_{2Q-1}^{\mathit{sym}} = \dim D_{6Q-1}^{\mathit{sym}} = Q$.  This tells us that $\dim X_{0}=\dim Y_{0}$, so that $g_{\alpha}\cdot X_{0}=Y_{0}$.
\qed
\end{proof}

In view of Lemma \ref{lemma3.6}, $N_{6Q-5} \cap X$ identifies with the kernel of the map $D_{2Q-1}^{\mathit{sym}} \rightarrow D_{6Q-1}^{\mathit{sym}}\oplus L$ induced by $g_{\alpha}: X\rightarrow (Y,\Delta)$. With respect to the bases $E_{1},\ldots ,E_{Q}$ of $D_{2Q-1}^{\mathit{sym}}$ and $F_{1},\ldots ,F_{Q},1$ of $D_{6Q-1}^{\mathit{sym}}\oplus L$ the matrix of this induced map has the form $\left(\frac{M}{b}\right)$ where $M$ is a $Q$ by $Q$ matrix and $b=(b_{1},\ldots , b_{Q})$ is a row vector. We shall use Theorem \ref{theorem2.5} to write down $M$ and $b$.

\begin{lemma}
\label{lemma3.7}
The map $D_{2Q-1}^{\mathit{sym}} \rightarrow D_{6Q-1}^{\mathit{sym}}\oplus L$ induced by $g_{\alpha}: X\rightarrow (Y,\Delta)$ takes $E_{j}$ to $\left(E_{j}\cdot \left(\sum_{rs=Q}\alpha^{s}C_{r}^{s}x^{2s}y^{2s}\right),b_{j}\right)$, where $b_{j}=0$ if $2j-1\equiv 2Q-2j \mod{3}$, and is 1 otherwise.
\end{lemma}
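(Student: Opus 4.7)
The plan is to trace a specific lift of $E_j$ from $D_{2Q-1}^{\mathit{sym}}$ back to $X$, apply $g_\alpha$, and then push the result down to $D_{6Q-1}^{\mathit{sym}}\oplus L$ using Definition \ref{def3.5}, with Theorem \ref{theorem2.5}(2) doing the heavy lifting.

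First I would choose the lift $u = [2j-1,2Q-2j]\,P^{Q-1}\in X$ (this really is in $X$, since $(2j-1)+(2Q-2j)+4(Q-1)=6Q-5$) and observe that, by Definition \ref{def3.4}, $u$ maps to $E_j$. Then
\[
g_\alpha\,u = [2j-1,2Q-2j]\,P^{Q}\;+\;\alpha\,x^{2}y^{2}\,[2j-1,2Q-2j]\,P^{Q-1}.
\]
The second summand is manifestly in $Y$, and, being a symmetric polynomial in $x,y$ times $P^{Q-1}$, is taken by the map of Definition \ref{def3.5} to $\bigl([2j-1,2Q-2j]\,\alpha^{Q}x^{2Q}y^{2Q},\,0\bigr)$ — exactly the $s=Q$ (i.e.\ $r=1$, $C_{r}=1$) term of the sum we want.

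The first summand is handled by applying Theorem \ref{theorem2.5}(2) with $i=2j-1$, $j=2Q-2j$: it rewrites $[2j-1,2Q-2j]\,P^{Q}$ as $\varepsilon\,\Delta + [2j-1,2Q-2j]\,\funnysum (C_{r}P)^{s}$, where $\varepsilon$ is precisely $b_j$ (1 unless $2j-1\equiv 2Q-2j \mod 3$). The $\varepsilon\,\Delta$ part maps to $(0,b_j)$ under Definition \ref{def3.5}. For each remaining term, $[2j-1,2Q-2j]\,C_r^{s}$ is a symmetric form in $x,y$ (both $[2j-1,2Q-2j]$ and $C_r=A_r^2$ are symmetric), so $[2j-1,2Q-2j]\,C_r^{s}P^{s}$ expands as a linear combination of elements $[i',j']\,P^{s}$ with $s<Q$ lying in $Y$; by linearity of the map in Definition \ref{def3.5} its image is $\bigl([2j-1,2Q-2j]\,C_r^{s}\,\alpha^{s}x^{2s}y^{2s},\,0\bigr)$.

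Finally I would collect: the contributions from $\funnysum$ give the terms with $s<Q$, and the term $\alpha\,x^{2}y^{2}P^{Q-1}$ provides exactly the missing $s=Q$ term, so that together they sum to $E_{j}\cdot\bigl(\sum_{rs=Q}\alpha^{s}C_r^{s}x^{2s}y^{2s}\bigr)$ in the first component, while the $\Delta$-coefficient is $b_j$. The only subtle bookkeeping step — and the main point to watch — is matching the $\alpha$-exponents: the $s=Q$ contribution must come from the $\alpha x^{2}y^{2}P^{Q-1}$ summand of $g_\alpha u$ rather than from the $\funnysum$, since the latter is indexed only over $s\ne Q$. Once that is correctly aligned, the rest is just linearity plus a quote of Theorem \ref{theorem2.5}(2).
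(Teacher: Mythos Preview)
Your argument is correct and follows essentially the same route as the paper's own proof: lift $E_j$ to $E_j P^{Q-1}$, split $g_\alpha\cdot E_j P^{Q-1}$ into the $P^Q$ piece (handled by Theorem~\ref{theorem2.5}(2)) and the $\alpha x^2y^2 P^{Q-1}$ piece (supplying the $s=Q$ term), and push everything through Definition~\ref{def3.5}. One small slip: you wrote ``$C_r=A_r^2$'', but the definition is $C_{2r}=A_r^2$, i.e.\ $C_r=A_{r/2}^2$ for $r\ge 2$; this does not affect the argument since all you need is that $C_r$ is symmetric in $x,y$.
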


\begin{proof}
$E_{j}$ pulls back to $E_{j}\cdot P^{Q-1}$ in $X$. Multiplication by $g_{\alpha}$ takes this to $E_{j}\cdot (\alpha x^{2}y^{2}P^{Q-1}+P^{Q})$. By Theorem \ref{theorem2.5} this is $$\textstyle b_{j}\Delta + E_{j}\left(\alpha x^{2}y^{2}P^{Q-1}+\funnysum (C_{r}P)^{s}\right)\!\!.$$ Under the map of Definition \ref{def3.5}, the first term on the right goes to $(0,b_{j})$, while the second goes to $E_{j}\cdot\sum_{rs=Q}(C_{r}\alpha x^{2}y^{2})^{s}$, giving the lemma.
\qed
\end{proof}

\begin{theorem}
\label{theorem3.8}
Situation as in Lemma \ref{lemma3.7}. The image of $E_{j}$ is $\left(\sum \alpha ^{s}F_{i},b_{j}\right)$ where the sum extends over all pairs $(s,i)$ with $s/Q$ and $i\equiv j\mod{3s}$.
\end{theorem}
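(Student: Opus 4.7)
The plan is to pick up where Lemma \ref{lemma3.7} leaves off: since $b_j$ is already as claimed, it remains to show that $E_j \cdot \sum_{rs=Q} \alpha^s C_r^s x^{2s} y^{2s}$ equals $\sum \alpha^s F_i$, summed over $s \mid Q$ and $i \equiv j \mod{3s}$ with $1 \leq i \leq Q$. I would handle each $s \mid Q$ separately. The case $r = 1$ (so $s = Q$) is immediate: $C_1 = 1$ gives $E_j \cdot x^{2Q} y^{2Q} = F_j$, matching the claim since the only $i \in [1,Q]$ with $i \equiv j \mod{3Q}$ is $i = j$.

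For $r \geq 2$, the first step is to rewrite $C_r^s \cdot (xy)^{2s}$. Since $C_r = A_{r/2}^2$ and $2s$ is a power of $2$, the Frobenius in characteristic $2$ gives $(A_{r/2} \cdot xy)^{2s} = \sum x^{2sc} y^{2sd}$, summed over ordered pairs $(c,d)$ of positive integers with $c + d = 2r$ and $c \equiv d \mod{3}$. Multiplying by $E_j = x^{2j-1}y^{2Q-2j} + x^{2Q-2j}y^{2j-1}$ and using $s(c+d) = 2Q$, I would verify that the monomial $x^{2j-1+2sc}y^{2Q-2j+2sd}$ coincides with the odd-$x$ half $x^{2Q+2i-1}y^{4Q-2i}$ of $F_i$ for $i = j + sc - Q$, while the monomial $x^{2Q-2j+2sc}y^{2j-1+2sd}$ coincides with the even-$x$ half of $F_{i'}$ for $i' = j + sd - Q$.

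The key combinatorial step, which I expect to be the main obstacle, is to establish a bijection between valid pairs $(c,d)$ and indices $i \in [1,Q]$ with $i \equiv j \mod{3s}$, via $c = (Q + i - j)/s$. The residue condition $c \equiv d \mod{3}$ translates exactly to $i \equiv j \mod{3s}$; integrality of $c$ follows from $s \mid (i - j)$; and the range $c \in [1, 2r-1]$ reduces to $|i - j| \leq Q - s$, which I would verify using only that $s \mid Q$---the multiples of $s$ in $[-(Q-1), Q-1]$ are exactly $\{0, \pm s, \ldots, \pm(Q - s)\}$. Under this bijection each $F_i$ appears exactly once: its odd-$x$ half comes from the pair $(c, 2r - c)$ via the first term of $E_j$ and its even-$x$ half from the pair $(2r - c, c)$ via the second term, so both halves are assembled without cancellation in characteristic $2$. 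Summing gives the claimed $\alpha^s$ coefficient.
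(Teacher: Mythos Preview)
Your approach is essentially the paper's---both expand $C_r^s x^{2s}y^{2s}$ as an explicit sum of monomials and then multiply by $E_j$---but your parametrization by ordered pairs $(c,d)$ introduces a genuine gap. The claimed bijection between valid pairs $(c,d)$ and indices $i\in[1,Q]$ with $i\equiv j\pmod{3s}$ is not a bijection: the map $i\mapsto c=(Q+i-j)/s$ is only an injection. Your check that ``$c\in[1,2r-1]$ reduces to $|i-j|\le Q-s$,'' together with the remark about multiples of $s$ in $[-(Q-1),Q-1]$, establishes that the map is well-defined, but says nothing about surjectivity, and in fact surjectivity fails. For instance, with $Q=4$, $r=4$, $s=1$, $j=1$ the valid $c$'s are $\{1,4,7\}$ while the valid $i$'s are only $\{1,4\}$; the pair $(c,d)=(1,7)$ is not in the image. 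What actually saves the computation is that each $(c,d)$ not hit by your injection contributes, via the first term of $E_j$, a monomial with an exponent $\ge 4Q$, hence zero in $D=L[x,y]/(x^{4Q},y^{4Q})$ (in the example, $x^{3}y^{20}=0$); the analogous vanishing handles the second-term contributions for the reflected pair. You need to say this explicitly. Once you do, your accounting---odd-$x$ half of $F_i$ from $(c,2r-c)$ via the first term, even-$x$ half from $(2r-c,c)$ via the second---correctly assembles each $F_i$, and all leftover (pair, term) contributions vanish.

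The paper sidesteps this wrinkle by writing $C_r x^2 y^2=\sum x^{2r+2k}y^{2r-2k}$ over $k\in(-r,r)$ with $3\mid k$, so that $C_r^s x^{2s}y^{2s}=\sum x^{2Q+2l}y^{2Q-2l}$ over $l\in(-Q,Q)$ with $3s\mid l$. Multiplying by $E_j$ then yields $\sum F_{j+l}$ directly, and out-of-range $F_{j+l}$ are automatically zero in $D_{6Q-1}$; no separate bijection or case $r=1$ versus $r\ge 2$ is needed.
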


\begin{proof}
Using the definitions of $A_{r}$ and $C_{r}$ we find that $C_{r}x^{2}y^{2}\! =\! \sum \! x^{2r+2k}y^{2r-2k}$, the sum extending over all $k$ in $(-r,r)$ with $k\equiv 0\mod{3}$. So $C_{r}^{s}x^{2s}y^{2s}=\sum x^{2Q+2l}y^{2Q-2l}$, the sum extending over all $l$ in $(-Q,Q)$ with $l\equiv 0\mod{3s}$. Then $E_{j}(C_{r}^{s}x^{2s}y^{2s})$ is $\sum F_{i}$, the sum extending over all $i\equiv j\mod{3s}$, and Lemma \ref{lemma3.7} gives the result.
\qed
\end{proof}

\begin{corollary}
\label{corollary3.9}
Let $b_{i}=0$ if $2i-1\equiv 2Q-2i\mod{3}$ and $b_{i}=1$ otherwise.  Then the matrix of the induced map $D_{2Q-1}^{\mathit{sym}}\rightarrow D_{6Q-1}^{\mathit{sym}}\oplus L$ with respect to the bases introduced earlier is $\left(\frac{M}{b(Q)}\right)$ where $m_{i,j}=\sum\alpha^{s}$, the sum extending over all $s/Q$ with $i\equiv j\mod{3s}$, and $b(Q)=(b_{1},\ldots ,b_{Q})$.
\end{corollary}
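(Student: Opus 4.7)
The plan is to read off the matrix $\left(\frac{M}{b(Q)}\right)$ column by column directly from Theorem \ref{theorem3.8}. With respect to the bases $E_1, \ldots, E_Q$ of $D_{2Q-1}^{\mathit{sym}}$ and $F_1, \ldots, F_Q, 1$ of $D_{6Q-1}^{\mathit{sym}} \oplus L$, the $j$-th column of the matrix is by definition the coordinate vector of the image of $E_j$ under the induced map, so the entire task reduces to expanding that image in the chosen basis.

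By Theorem \ref{theorem3.8}, the image of $E_j$ equals $\left(\sum \alpha^s F_i,\, b_j\right)$, with the sum ranging over pairs $(s,i)$ such that $s/Q$ and $i \equiv j \mod{3s}$. The step I would carry out is simply regrouping this sum by the index $i \in \{1, \ldots, Q\}$: the coefficient of $F_i$ then becomes $\sum \alpha^s$ taken over all $s/Q$ with $i \equiv j \mod{3s}$, which is exactly the $m_{i,j}$ given in the statement. The last coordinate is $b_j$, and the case description of $b_j$ extracted in Lemma \ref{lemma3.7} matches the one in the corollary verbatim. Assembling these columns for $j = 1, \ldots, Q$ produces precisely the block matrix $\left(\frac{M}{b(Q)}\right)$. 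I do not foresee any real obstacle here; the corollary is essentially a matrix-language repackaging of Theorem \ref{theorem3.8}, and the only care required is to keep the index sets on the two sides properly aligned when fixing $i$ and summing over $s$.
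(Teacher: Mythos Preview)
Your proposal is correct and is exactly the intended reading: the paper gives no separate proof of Corollary \ref{corollary3.9}, treating it as an immediate restatement of Theorem \ref{theorem3.8} in matrix language. Your column-by-column extraction, regrouping the sum over pairs $(s,i)$ by fixing $i$, is precisely that restatement.
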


\begin{corollary}
\label{corollary3.10}
Suppose that $\alpha \in L$ is transcendental over $Z/2$. Then\linebreak $N_{6Q-5}\cap X=(0)$.
\end{corollary}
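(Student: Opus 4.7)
The plan is to reduce the question to invertibility of the $Q\times Q$ matrix $M$ of Corollary \ref{corollary3.9}: if $\det M\ne 0$ in $L$, then the map whose matrix is $\left(\frac{M}{b(Q)}\right)$ automatically has trivial kernel, and we are done.

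To this end I would regard $\det M$ as a polynomial in $\alpha$ and read off its leading term. The key observation is that the entry $m_{i,j}=\sum_{s}\alpha^{s}$ contains the monomial $\alpha^{Q}$ only when $3Q\mid (i-j)$; since $1\le i,j\le Q$, this forces $i=j$. Hence the diagonal entries of $M$ have $\alpha$-degree exactly $Q$, with leading coefficient $1$ (nonzero in characteristic $2$). For $i\ne j$, only those $s\mid Q$ with $3s\le |i-j|<Q$ contribute to $m_{i,j}$; since $s$ must be a power of $2$ strictly smaller than $Q/3$, such $s$ is bounded above by $Q/4$, so every off-diagonal entry of $M$ has $\alpha$-degree at most $Q/4$.

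In the Leibniz expansion $\det M=\sum_{\sigma\in S_{Q}}\prod_{i}m_{i,\sigma(i)}$, the identity permutation contributes a product with leading monomial $\alpha^{Q^{2}}$ and coefficient $1$. Any other $\sigma$ moves at least two indices off the diagonal, so its contribution has $\alpha$-degree at most $(Q-2)Q+2(Q/4)=Q^{2}-3Q/2<Q^{2}$. Hence $\det M$ is a polynomial in $\alpha$ with leading term $\alpha^{Q^{2}}$, in particular a nonzero polynomial. Since $\alpha$ is transcendental over $Z/2$, it follows that $\det M\ne 0$ in $L$, giving the corollary.

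The only step that needs a moment's thought is the off-diagonal degree bound, and even that is short: because $Q$ is a power of $2$, every divisor $s$ of $Q$ is a power of $2$, so the inequality $3s<Q$ forces $s\le Q/4$. Everything else is routine bookkeeping with the matrix from Corollary \ref{corollary3.9}.
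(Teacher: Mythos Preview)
Your argument is correct and is essentially the paper's own proof, carried out in more detail: both show $\det M\ne 0$ by observing that the diagonal entries have $\alpha$-degree exactly $Q$ while the off-diagonal entries have strictly smaller degree, so the identity permutation dominates in the Leibniz expansion. Your sharper off-diagonal bound $Q/4$ is fine but unnecessary---the paper's cruder bound ``degree $<Q$'' already gives the non-identity contributions degree at most $Q^{2}-2$.
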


\begin{proof}
The matrix $M$ of Corollary \ref{corollary3.9} has entries in $Z/2[\alpha]$. Each $m_{i,i}$ is a degree $Q$ polynomial in $\alpha$ while the other entries have degree $<Q$. Since $\alpha$ is transcendental over $Z/2$, $\det M\ne 0$, $\left(\frac{M}{b(Q)}\right)$ has rank $Q$ and $D_{2Q-1}^{\mathit{sym}}\rightarrow D_{6Q-1}^{\mathit{sym}}\oplus L$ is 1--1. But the kernel of this map identifies with $N_{6Q-5}\cap X$.
\qed
\end{proof}

For the rest of this section we assume $\alpha$ transcendental over $Z/2$. We'll use Corollary \ref{corollary3.10} to show that $N_{6Q-5}$ is $(0)$. Any $u\ne 0$ in $O$ may be written as $A(x,y)\cdot z^{r}+$ lower degree terms in $z$, where $A(x,y)\ne 0$ in $D$, and $r<4Q$. We say that $u$ has $z$-degree $r$.

\begin{lemma}
\label{lemma3.11}
Suppose that $u\in O_{6Q-5}$ is fixed by $(x,y) \rightarrow (y,x)$ and has $z$-degree $\le 4Q-4$.  Then if $g_{\alpha}u\in g_{\alpha}X$, $u\in X$.
\end{lemma}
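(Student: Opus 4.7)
The plan is to reduce the statement to the vanishing in Corollary~\ref{corollary3.10}. By hypothesis pick $v\in X$ with $g_\alpha v = g_\alpha u$ and set $w:=u-v$. Each generator $[i,j]P^k$ of $X$ is symmetric in $(x,y)$ and has $z$-degree at most $4(Q-1)=4Q-4$, so $v$, and therefore $w$, inherit both properties; moreover $g_\alpha w = 0$. The conclusion $u\in X$ is equivalent to $w\in X$, so the task reduces to proving: \emph{any symmetric $w\in N_{6Q-5}$ of $z$-degree $\le 4Q-4$ lies in $X$}. (Corollary~\ref{corollary3.10} then retroactively forces $w=0$.)

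To prove this reduced claim, I would use the $D$-module decomposition $O = \bigoplus_{k=0}^{4Q-1} D\, z^k$ and expand $w = \sum_{k=0}^{4Q-4} a_k(x,y)\, z^k$ with $a_k \in D_{6Q-5-k}$ symmetric. The identity $g_\alpha w = 0$ becomes, for each $0\le j\le 4Q-1$, the $D$-relation
$$a_{j-4} + xy\, a_{j-2} + (x^3+y^3)\, a_{j-1} + \alpha x^2 y^2\, a_j = 0,$$
with $a_k = 0$ outside $[0,4Q-4]$. Reading from the top ($j=4Q-1,4Q-2,\ldots$) first forces $a_{4Q-5}=0$, then expresses each $a_k$ with $k<4Q-4$ as an explicit polynomial in $A := a_{4Q-4} \in D_{2Q-1}^{\mathit{sym}}$. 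The natural $X$-candidate with leading $z$-term $A\, z^{4Q-4}$ is $v_A := A \cdot P^{Q-1}$; comparing its $z$-coefficients against those forced on $w$ by the recursion, the discrepancies at each lower level $z^{4Q-4-\ell}$ turn out to be leading $z$-coefficients of elements of the form $[i',j']\, P^k$ with $k < Q-1$ in $X$. Iteratively absorbing these into $v_A$ produces an element of $X$ equal to $w$. The $\Delta$-term appearing in Theorem~\ref{theorem2.5}(2) when one expands $A\cdot P^Q$ contributes nothing here, because $g_\alpha w = 0$ has no $\Delta$-component.

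The main obstacle is the bookkeeping of this iterative matching: one must identify at each $z$-level the correct $X$-correction absorbing the recursive discrepancy, and verify, using the identities of Section~\ref{section2} (especially Theorem~\ref{theorem2.5}(2) and Lemma~\ref{lemma2.4}), that the procedure is consistent and terminates at the low $z$-end.
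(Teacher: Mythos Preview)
Your reduction to the kernel case is valid but does not help, and the subsequent argument has a real gap. After you subtract $v_A = A\cdot P^{Q-1}$ from $w$, the remainder $w' = w - v_A$ is \emph{no longer} in $N_{6Q-5}$: you have $g_\alpha w' = g_\alpha v_A \ne 0$ in general. So your recursion (which came from $g_\alpha w = 0$) does not apply to $w'$, and you cannot iterate within the kernel. What you actually have is $g_\alpha w' \in g_\alpha X$, i.e.\ $w'$ satisfies the \emph{original} hypothesis of the lemma with smaller $z$-degree---so the reduction has bought nothing. Your claim that the discrepancies ``turn out to be leading $z$-coefficients of elements of the form $[i',j']P^k$'' is exactly the point at issue: for that you need the $z$-degree at each stage to be divisible by~$4$, and you give no reason why this holds. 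What you call ``bookkeeping'' is in fact the missing structural step.

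The paper's proof avoids all of this. It inducts directly on the $z$-degree of $u$ (no kernel reduction, no appeal to Corollary~\ref{corollary3.10}). If the $z$-degree is $4k$, subtract $AP^k$ and the $z$-degree drops. The crucial case is when the $z$-degree $r$ is \emph{not} divisible by $4$: then $g_\alpha u = A z^{r+4} + \cdots$ has $z$-degree $r+4$, which is neither a multiple of $4$ nor equal to $2$. But $g_\alpha u$ lies in $(Y,\Delta)$, whose basis elements $[i,j]P^k$ have $z$-degree $4k$ and whose element $\Delta$ has $z$-degree $2$; so no nonzero element of $(Y,\Delta)$ can have such a $z$-degree, a contradiction. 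This single observation replaces your entire iterative matching and shows why the leading term is always of the right shape.
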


\begin{proof}
We argue by induction on the $z$-degree of $u$. If the $z$-degree is 0, then $u$, being fixed by $(x,y)\rightarrow (y,x)$, is a linear combination of $x^{i}y^{j}+x^{j}y^{i}$ with $i+j=6Q-5$, and so is in $X$. If $u=A\cdot z^{4k}+ \cdots$, $k>0$, let $v=u+AP^{k}$. Then the $z$-degree of $v$ is $<4k$, and $g_{\alpha}v\in g_{\alpha}X$. By induction, $v\in X$, and so $u\in X$. Suppose finally that $u=A\cdot z^{r}+\cdots$ with $r\not\equiv 0\mod{4}$ and $r<4Q-4$. Then $g_{\alpha}u=A\cdot z^{r+4}+\cdots $ has $z$-degree that is neither divisible by 4 nor equal to 2. As $g_{\alpha} u\in g_{\alpha}X\subset (Y,\Delta)$, our description given earlier of a basis of $(Y,\Delta)$ shows this to be impossible.
\qed
\end{proof}

\begin{lemma}
\label{lemma3.12}
If $u\in N_{6Q-5}$ has $z$-degree $\le 4Q-4$ then $u=0$.
\end{lemma}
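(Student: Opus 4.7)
My plan is to reduce to the symmetric case, where Lemma \ref{lemma3.11} and Corollary \ref{corollary3.10} directly apply. Let $\tau$ be the $L$-algebra involution of $O$ swapping $x$ and $y$. Since $g_{\alpha} = z^{4} + xyz^{2} + (x^{3}+y^{3})z + \alpha x^{2}y^{2}$ is itself $\tau$-fixed, $\tau$ carries $N_{6Q-5}$ to itself and preserves $z$-degree.

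Suppose $u \in N_{6Q-5}$ has $z$-degree $\le 4Q-4$. Then $u + \tau(u)$ also lies in $N_{6Q-5}$, still has $z$-degree $\le 4Q-4$, and is fixed by $\tau$. Since $g_{\alpha}(u + \tau(u)) = 0 \in g_{\alpha}X$, Lemma \ref{lemma3.11} gives $u + \tau(u) \in X$. Combined with $u + \tau(u) \in N_{6Q-5}$, Corollary \ref{corollary3.10} forces $u + \tau(u) = 0$. In characteristic 2 this is exactly the statement that $\tau(u) = u$, i.e.\ that $u$ itself is fixed by $(x,y) \to (y,x)$.

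Now I can feed $u$ itself into Lemma \ref{lemma3.11}: $u$ is $\tau$-symmetric, has $z$-degree $\le 4Q-4$, and $g_{\alpha}u = 0 \in g_{\alpha}X$, so $u \in X$. A second application of Corollary \ref{corollary3.10} then yields $u = 0$, completing the proof.

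The only thing to watch is that nothing in the argument collapses because we are in characteristic 2: the standard symmetric/antisymmetric decomposition is unavailable, but we do not need it, since the single operation $u \mapsto u + \tau(u)$ suffices to produce a $\tau$-fixed element, and the two-step application of Lemma \ref{lemma3.11} / Corollary \ref{corollary3.10} (first to $u + \tau(u)$, then to $u$) is self-bootstrapping. I expect no real obstacle beyond verifying that $\tau$ preserves the hypotheses, which is immediate from $\tau(g_{\alpha}) = g_{\alpha}$.
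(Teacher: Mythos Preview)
Your proof is correct and follows essentially the same route as the paper's own argument: symmetrize via the involution swapping $x$ and $y$, apply Lemma~\ref{lemma3.11} and Corollary~\ref{corollary3.10} to $u+\tau(u)$ to conclude $u$ is already symmetric, then apply both results again to $u$ itself. The paper's proof is identical in structure, differing only in notation (it writes $\bar{u}$ for $\tau(u)$).
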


\begin{proof}
The linear automorphism $(x,y,z) \rightarrow (y,x,z)$ of $L[x,y,z]$ fixes $g_{\alpha}$. So the automorphism of $P$ that it induces stabilizes $ N_{6Q-5}$.  Let $\bar{u}$ be the image of $u$ under this automorphism. Lemma \ref{lemma3.11} applied to $u+\bar{u}$ shows that $u+\bar{u}\in X$. Since $u+\bar{u}\in N_{6Q-5}$, Corollary \ref{corollary3.10} shows that $u=\bar{u}$. Applying Lemma \ref{lemma3.11} to $u$ we find that $u\in X$. Another application of Corollary \ref{corollary3.10} completes the proof.
\qed
\end{proof}

\begin{theorem}
\label{theorem3.13}
$ N_{6Q-5}=(0)$.
\end{theorem}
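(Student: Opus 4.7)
\begin{proofsketch}
My plan is to reduce Theorem~\ref{theorem3.13} to Lemma~\ref{lemma3.12}. Writing $u\in N_{6Q-5}$ as $u=\sum_r u_rz^r$ with $u_r\in D_{6Q-5-r}$, it suffices to show $u_{4Q-1}=u_{4Q-2}=u_{4Q-3}=0$; then $u$ has $z$-degree at most $4Q-4$ and Lemma~\ref{lemma3.12} forces $u=0$.

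Reading off the coefficient of $z^s$ in the identity $g_\alpha u=0$ gives in $D$
\[u_{s-4}+u_{s-2}xy+u_{s-1}(x^3+y^3)+\alpha u_s x^2 y^2=0,\qquad 0\le s\le 4Q-1,\]
with $u_r=0$ for $r<0$. For $s\ge 4$ this is a four-term recursion determining every $u_r$ ($r\le 4Q-5$) as an $L[x,y]$-polynomial in the four ``seeds'' $u_{4Q-4},u_{4Q-3},u_{4Q-2},u_{4Q-1}$. Since the only source of $\alpha$ is the $\alpha u_s x^2 y^2$ term, an easy induction gives $\alpha\text{-}\deg(u_r)\le\lfloor(4Q-1-r)/4\rfloor$ when the seeds are taken independent of $\alpha$; in particular $u_0$ has $\alpha$-degree at most $Q-1$.

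The four equations at $s\in\{0,1,2,3\}$ then become constraints on the seeds, living in $D_{6Q-1}\oplus D_{6Q-2}\oplus D_{6Q-3}\oplus D_{6Q-4}$ of total scalar dimension $8Q+6$, acting on the $8Q-6$ scalars making up the seeds. Each such constraint is a polynomial in $\alpha$ of degree at most $Q$. The plan is to isolate the $\alpha^Q$-parts of these constraints: a bookkeeping parallel to the derivation of $m_{i,j}$ in Theorem~\ref{theorem3.8} should reveal that the contribution of each seed to the $\alpha^Q$-part of each constraint has the same structural form as the matrix $M$ of Corollary~\ref{corollary3.9}---diagonal contributions carry strictly higher $\alpha$-degree than off-diagonal ones. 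For $\alpha$ transcendental over $Z/2$ the determinant argument of Corollary~\ref{corollary3.10} then applies to the leading system, which therefore has full column rank, and the four seeds must all vanish.

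The main obstacle is precisely this $\alpha$-degree tracking: one must identify, for each of the four constraints, exactly which seed contributes the $\alpha^Q$ monomials, and verify that the resulting ``leading block'' inherits the diagonal-dominant shape of $M$ so that its determinant is a nonzero polynomial in $\alpha$. Once the seeds vanish, in particular $u_{4Q-1}=u_{4Q-2}=u_{4Q-3}=0$, and Lemma~\ref{lemma3.12} concludes the proof.
\end{proofsketch}
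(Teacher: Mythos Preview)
Your route is genuinely different from the paper's. The paper never does any further $\alpha$-degree bookkeeping: it observes that $P=z(z+x+y)(z+\omega x+\omega^2 y)(z+\omega^2 x+\omega y)$, so the three substitutions $\sigma,\tau,\rho$ sending $z$ to $z+x+y$, $z+\omega x+\omega^2 y$, $z+\omega^2 x+\omega y$ fix $g_\alpha$ and hence stabilize $N_{6Q-5}$. If $u\in N_{6Q-5}$ had $z$-degree $4Q-1$, $4Q-2$ or $4Q-3$, one of $u^\sigma+u$, $u^\tau+u$, $u^\rho+u$ would be a nonzero element of $N_{6Q-5}$ of $z$-degree $\le 4Q-4$ (or would force $(x^3+y^3)A=0$ in $D$ for a nonzero $A\in D_{2Q-3}$), contradicting Lemma~\ref{lemma3.12}. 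No recursion is unrolled.

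Your recursion idea can be pushed through, but the sketch misidentifies the leading system. Unrolling $u_{r}=xy\,u_{r+2}+(x^3+y^3)u_{r+3}+\alpha x^2y^2 u_{r+4}$ and reading off the $\alpha^{Q}$-coefficient of constraint $s\in\{0,1,2,3\}$ as a linear form in the seeds, one gets exactly $x^{2Q}y^{2Q}\cdot u_{4Q-4+s}$ and nothing else. The reason the cross terms vanish is a characteristic-$2$ cancellation you do not mention: the contribution of seed $u_{4Q-4+t}$ with $t\ne s$ at level $\alpha^{Q}$ is counted by paths with $Q-1$ steps of $+4$ and $2a+3b=t-s$; for $t-s\in\{-3,-2,-1,1\}$ there are none, and for $t-s\in\{2,3\}$ there are $\binom{Q}{1}=Q\equiv 0\pmod 2$. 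So the leading block is not ``of the shape of $M$'' from Corollary~\ref{corollary3.9} at all --- it is the block-diagonal map given by multiplication by $x^{2Q}y^{2Q}$, which is visibly injective on each $D_{2Q-1-t}$ because every monomial there has both exponents $<2Q$. Picking rows on which this leading block is invertible, the corresponding $(8Q-6)\times(8Q-6)$ minor has $\alpha$-leading coefficient $\ne 0$, so the seed-to-constraint matrix has full column rank over $\bbz/2(\alpha)$ and hence over $L$. Note this already kills all four seeds, so $u=0$ follows directly and your appeal to Lemma~\ref{lemma3.12} becomes unnecessary. In short: the approach is salvageable, but the analogy to $M$ is a red herring, and the step that actually does the work is the parity $Q\equiv 0$.
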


\begin{proof}
Replacing $L$ by a larger field, if necessary, we may assume that $L$ contains some $\omega$ with $\omega^{2}+\omega + 1=0$. We make use of 3 linear automorphisms of $L[x,y,z]$:

\begin{eqnarray*}
\sigma: (x,y,z) & \rightarrow & (x,y,z+x+y)\\
\tau: (x,y,z) & \rightarrow & (x,y,z+\omega x + \omega^{2}y)\\
\rho: (x,y,z) & \rightarrow & (x,y,z+\omega^{2}x + \omega y)
\end{eqnarray*}

Since $P=z(z+x+y)(z+\omega x + \omega^{2}y)(z+\omega^{2}x+\omega y)$, these automorphisms fix $P$ as well as $x$ and $y$. So they fix $g_{\alpha}$, and the automorphisms of $O$ that they induce stabilize $N_{6Q-5}$.

Suppose now that $u = Az^{r}+\cdots$ is an element of $N_{6Q-5}$ of $z$-degree $r$. By Lemma \ref{lemma3.12}, $r=4Q-3$, $4Q-2$ or $4Q-1$. Suppose first that $r=4Q-3$. Then $u^{\sigma}+u=A(x+y)\cdot z^{4Q-4}+\cdots$. Since $A$ is a non-zero element of $D_{2Q-2}$, $A\cdot(x+y)\ne 0$ in $D$. This contradicts Lemma \ref{lemma3.12} applied to the element $u^{\sigma}+u$ of $N_{6Q-5}$. Suppose next that $u=Az^{4Q-2}+Bz^{4Q-3}+\cdots$ has $z$-degree $4Q-2$. Then:

\begin{eqnarray*}
u^{\tau}+u &=& \left(A(\omega x+\omega^{2}y)^{2}+B(\omega x+\omega^{2}y)\right)\cdot z^{4Q-4}+\cdots \\
u^{\rho}+u &=& \left(A(\omega^{2} x+\omega y)^{2}+B(\omega^{2} x+\omega y)\right)\cdot z^{4Q-4}+\cdots 
\end{eqnarray*}

Lemma \ref{lemma3.12} applied to $u^{\tau}+u$ and $u^{\rho}+u$ shows that both are 0. This immediately tells us that $(x^{3}+y^{3})\cdot A$ is 0 in $D$. Since $A$ is a non-zero element of $D_{2Q-3}$ this is impossible. Finally if $u=Az^{4Q-1}+\cdots$ has $z$-degree $4Q-1$, $u^{\sigma}+u=A(x+y)z^{4Q-2}+\cdots$, and we get an element of $N_{6Q-5}$ of $z$-degree $4Q-2$; we've shown this can't happen.
\qed
\end{proof}

\begin{corollary}
\label{corollary3.14}
Let $R_{\alpha}=L[x,y,z]/g_{\alpha}$ where $\alpha \in L$ is transcendental over $Z/2$. Let $f$ be any degree 6 element of $R_{\alpha}$ and $I$ be the ideal $(x^{4},y^{4},z^{4})$ of $R_{\alpha}$. Then $xyf^{Q}\in I^{[Q]}$ for all $Q$. Consequently, $f\in I^{*}$ in $R_{\alpha}$.
\end{corollary}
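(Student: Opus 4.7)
The strategy is to translate the injectivity statement of Theorem \ref{theorem3.13} into a surjectivity statement in the ``dual'' degree, via the Macaulay/Gorenstein pairing on the complete-intersection algebra $O = L[x,y,z]/(x^{4Q},y^{4Q},z^{4Q})$, whose top degree is $12Q-3$. Concretely, I aim to show that $g_\alpha: O_{6Q-2} \to O_{6Q+2}$ is surjective. Once this is in hand, lift $f$ to a homogeneous polynomial of degree $6$ in $L[x,y,z]$; then $xyf^Q$ lives in $L[x,y,z]_{6Q+2}$, and by the surjectivity its image in $O_{6Q+2}$ is $g_\alpha \cdot \bar h$ for some $\bar h \in O_{6Q-2}$. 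Lifting back to $L[x,y,z]$ gives $xyf^Q - g_\alpha h \in (x^{4Q},y^{4Q},z^{4Q})$. Reducing modulo $g_\alpha$ in $R_\alpha$ then yields $xyf^Q \in I^{[Q]}$.

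To produce the duality, I would observe that $O$ has a monomial basis $\{x^a y^b z^c : 0 \le a,b,c < 4Q\}$ and that $O_{12Q-3}$ is one-dimensional, spanned by $x^{4Q-1}y^{4Q-1}z^{4Q-1}$. For each $j$, multiplication in $O$ yields a perfect pairing
\[ O_j \times O_{12Q-3-j} \longrightarrow O_{12Q-3} \cong L, \]
the dual basis of $x^a y^b z^c$ being $x^{4Q-1-a}y^{4Q-1-b}z^{4Q-1-c}$. Since $\langle g_\alpha u, v\rangle$ and $\langle u, g_\alpha v\rangle$ both equal the coefficient of $x^{4Q-1}y^{4Q-1}z^{4Q-1}$ in $u \cdot g_\alpha \cdot v$, multiplication by $g_\alpha$ is self-adjoint under this pairing. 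Consequently $\ker(g_\alpha: O_j \to O_{j+4})$ is naturally dual to the cokernel of $g_\alpha: O_{12Q-7-j} \to O_{12Q-3-j}$. Setting $j = 6Q-5$ and applying Theorem \ref{theorem3.13} gives the desired surjectivity in degrees $6Q-2 \to 6Q+2$.

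For the tight-closure conclusion $f \in I^*$, Definition \ref{def1.2} calls for some nonzero $d \in R_\alpha$ with $df^q \in I^{[q]}$ for every power $q$ of $2$. Take $d = xy$: it is nonzero in the domain $R_\alpha$, and the argument above produces $xy f^Q \in I^{[Q]}$ for every such $Q$.

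The main obstacle is presenting the duality in accord with the paper's ``from scratch'' pledge. No deep local-cohomology machinery is really needed: both the perfect pairing and the self-adjointness of $g_\alpha$ are pure bookkeeping on the monomial basis of $O$. The only substantive question is whether to develop these observations in place, fold them into a short preliminary lemma, or bundle them into the companion section on local cohomology that the introduction advertises.
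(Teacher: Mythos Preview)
Your proposal is correct and matches the paper's proof essentially line for line: the paper too invokes the one-dimensionality of $O_{12Q-3}$, the nondegenerate multiplication pairing $O_i\times O_{12Q-3-i}\to L$, and the fact that $g_\alpha:O_{6Q-5}\to O_{6Q-1}$ and $g_\alpha:O_{6Q-2}\to O_{6Q+2}$ are dual, so that Theorem~\ref{theorem3.13} forces the latter to be onto and hence $xyf^{Q}\in(x^{4Q},y^{4Q},z^{4Q},g_\alpha)$. The paper disposes of $Q=1$ with a bare ``we may assume $Q>1$''; you should note that Theorem~\ref{theorem3.13} is stated only for $Q\ge 2$, but the case $Q=1$ is an easy direct check (every monomial $x^{a+1}y^{b+1}z^c$ with $a+b+c=6$ and $a,b,c\le 3$ lies in $(x^4,y^4,z^4,g_\alpha)$).
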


\begin{proof}
We may assume $Q\! >\! 1$. $O_{12Q-3}$ is 1-dimensional, spanned by $(xyz)^{4Q-1}$. If $i+j=12Q-3$, multiplication gives a bilinear pairing $O_{i}\times O_{j}\rightarrow L$, and one sees immediately that the pairing is non-degenerate. Multiplication by $g_{\alpha}$ gives maps $O_{6Q-5}\rightarrow O_{6Q-1}$ and $O_{6Q-2}\rightarrow O_{6Q+2}$ that are dual under the above pairings. By Theorem \ref{theorem3.13} the first of these maps is 1--1. So the second is onto, and in particular $xyf^{Q}$ lies in its image. In other words, $xyf^{Q}\in (x^{4Q}, y^{4Q}, z^{4Q}, g_{\alpha})$ in $L[x,y,z]$. Passing to $R_{\alpha}$ we get the result.
\qed
\end{proof}

\begin{theorem}
\label{theorem3.15}
Let $L$ be an algebraically closed field of characteristic 2, $P=z^{4}+xyz^{2}+(x^{3}+y^{3})z$ and $P_{1}=x^{2}y^{2}$. Let \rgen be as in section \ref{section1}, $f$ be any degree 6 element of $L[x,y,z]$ and $I$ be the ideal $(x^{4},y^{4},z^{4})$ of $L[x,y,z]$. Then, in the language of Theorem \ref{theorem1.3}, $f\in I^{*}$ in \rgen.
\end{theorem}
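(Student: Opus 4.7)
The strategy is to observe that Theorem \ref{theorem3.15} is essentially a restatement of Corollary \ref{corollary3.14}, applied not over $L$ itself but over the larger base field $L' := L(t)$, with the role of the transcendental element $\alpha$ played by $t$. Since $L$ has characteristic 2, so does $L'$; and $t$, being transcendental over $L \supseteq \mathbb{Z}/2$, is in particular a nonzero element of $L'$ that is transcendental over $\mathbb{Z}/2$. Hence the hypotheses of Corollary \ref{corollary3.14} are met for the base field $L'$ and element $\alpha = t$. Moreover the ring $R_\alpha = L'[x,y,z]/g_\alpha$ attached to this data is exactly
\[
L(t)[x,y,z]/(P + t P_1) = \rgen.
\]

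With this identification secured, the argument is essentially automatic. A degree 6 element $f \in L[x,y,z]$ is a fortiori a degree 6 element of $L'[x,y,z]$, and the ideal $(x^4, y^4, z^4)$ is unaffected by the base change. Applying Corollary \ref{corollary3.14} over $L'$ therefore yields $xy f^Q \in I^{[Q]}$ in $\rgen$ for every power $Q$ of 2; taking $d = xy$ (which is nonzero in the domain $\rgen$) in Definition \ref{def1.2} gives $f \in I^*$ in $\rgen$. There is no serious obstacle at this step: the substantive input is Theorem \ref{theorem3.13}, and the duality argument converting that vanishing into a containment statement was already packaged in Corollary \ref{corollary3.14}. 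Note that the algebraic closure of $L$ assumed in Theorem \ref{theorem3.15} is not used here; it is there for compatibility with Theorem \ref{theorem1.3} and will be exploited only later, when one produces the many algebraic $\alpha \in L$ needed for condition~(b) of that theorem.
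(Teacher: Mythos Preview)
Your proposal is correct and follows exactly the paper's own approach: identify $\rgen$ with $L(t)[x,y,z]/g_t$ and apply Corollary~\ref{corollary3.14} with $L$ replaced by $L(t)$ and $\alpha = t$. Your additional remark that the algebraic closure of $L$ plays no role at this point is accurate and a nice clarification.
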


\begin{proof}
$\rgen = L(t)[x,y,z]/g_{t}$ and we use Corollary \ref{corollary3.14} with $L$ replaced by $L(t)$.
\qed
\end{proof}

\section{Matrix calculations---the case of algebraic $\bm{\alpha}$}
\label{section4}

\begin{definition}
\label{def4.1}
Suppose $Q\ge 2$ is a power of 2. A matrix $M=|m_{i,j}|$ $1\le i,j\le Q$ with entries in $L$ is a ``special $Q$-matrix'' if the following hold:

\begin{enumerate}
\item[(1)] $m_{i,j}=0$ if $i\not\equiv j\mod{3}$ or\quad $i=j$.
\item[(2)] If $i\equiv j\mod{3}, i\ne j$, then $m_{i,j}\ne 0$, and depends only on $\ord_{2}(i-j)$.
\end{enumerate}
\end{definition}

\begin{theorem}
\label{theorem4.2}
A special $Q$-matrix has rank $Q-2$.
\end{theorem}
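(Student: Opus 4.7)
The plan is to block-decompose $M$ and reduce to a rank statement about a single Toeplitz block. Since $m_{i,j}=0$ whenever $i\not\equiv j\mod{3}$, a simultaneous permutation of rows and columns puts $M$ in block-diagonal form $M=M_{0}\oplus M_{1}\oplus M_{2}$, with $M_{c}$ indexed by $\{1\le i\le Q:i\equiv c\mod{3}\}$, of size $k_{c}$. Within each block the off-diagonal $(i,j)$ entry is $d_{\ord_{2}(|i-j|)}$ for a fixed nonzero sequence $d_{0},d_{1},\ldots\in L$, so each $M_{c}$ is a symmetric Toeplitz matrix with zero diagonal. For $Q=2^{n}$ with $n\ge 1$ the class sizes are $((Q-1)/3,(Q+2)/3,(Q-1)/3)$ when $n$ is even and $((Q-2)/3,(Q+1)/3,(Q+1)/3)$ when $n$ is odd; and since $(2^{n}\pm 1)/3$ is an integer whose triple is odd and hence is itself odd, in both cases exactly two of $k_{0},k_{1},k_{2}$ are odd and one is even.

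Because $L$ has characteristic 2, any symmetric matrix with zero diagonal is alternating (that is, $v^{T}Mv=0$ for every $v$), and an alternating matrix has even rank. So the rank of each $M_{c}$ is at most $k_{c}$ if $k_{c}$ is even and at most $k_{c}-1$ if $k_{c}$ is odd, giving the upper bound $\mathrm{rank}\,M\le Q-2$.

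For the matching lower bound I would prove by strong induction on $k$ the following key lemma: any $k\times k$ symmetric Toeplitz matrix $T_{k}$ over $L$ with zero diagonal and off-diagonal $(i,j)$ entry $d_{\ord_{2}(|i-j|)}$ (all $d_{r}$ nonzero) has rank $k$ if $k$ is even and $k-1$ if $k$ is odd, and moreover in the odd case the one-dimensional kernel is spanned by a vector whose coordinate sum is nonzero. The odd-rank statement is straightforward: the principal $(k-1)\times(k-1)$ submatrix on $\{1,\ldots,k-1\}$ is itself a $T_{k-1}$ with $k-1$ even, hence of full rank by induction, and combined with the alternating upper bound this forces $\mathrm{rank}\,T_{k}=k-1$. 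The main obstacle is the even case. For this I would split the coordinates of $T_{k}$ by parity of index, putting $T_{k}$ in $2\times 2$ block form whose diagonal blocks $A$ and $B$ (of sizes $\lceil k/2\rceil$ and $\lfloor k/2\rfloor$) are themselves matrices of the same Toeplitz type but with the shifted sequence $d_{1},d_{2},\ldots$, and whose off-diagonal blocks are $d_{0}$ times all-ones matrices. The kernel conditions then read $Av_{o}=d_{0}(\mathbf{1}^{T}v_{e})\mathbf{1}$ and $Bv_{e}=d_{0}(\mathbf{1}^{T}v_{o})\mathbf{1}$; applying the strengthened inductive hypothesis — in particular that when $A$ or $B$ has a one-dimensional kernel, its kernel vector has nonzero coordinate sum, so by symmetry $\mathbf{1}$ is not in its image — one deduces $v_{o}=v_{e}=0$. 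Extracting the nonzero-sum assertion for the kernel of $T_{k}$ itself (needed to keep the induction going in the odd case) then follows from the same block analysis. Summing block ranks yields $\mathrm{rank}\,M=Q-2$.
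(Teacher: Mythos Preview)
Your approach is correct and genuinely different from the paper's. The paper does not separate indices by residue modulo $3$; instead it keeps $M$ intact and writes it as a $3\times 3$ block matrix by \emph{position} (first $Q/4$ indices, middle $Q/2$, last $Q/4$). Using that $m_{i,j}$ depends only on $\ord_{2}(i-j)$, one sees that the four corner $Q/4\times Q/4$ blocks satisfy $M_{1}=M_{8}$, $M_{3}=M_{6}$, and that $M_{1}+M_{3}$ is a nonzero scalar matrix (because $\ord_{2}(i-j)$ and $\ord_{2}(i-j\pm 3Q/4)$ coincide except when $i=j$), while the central $Q/2\times Q/2$ block is itself a special $Q/2$-matrix. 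Elementary row and column operations then reduce the rank computation to $\mathrm{rank}\,N + 2(Q/4)$, and induction on $Q$ finishes in a few lines.

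Your route trades this short positional trick for a more structural argument: the mod-$3$ decomposition plus the observation that in characteristic $2$ the blocks are alternating gives the upper bound for free, which is elegant. The cost is that the lower bound becomes a standalone lemma about ``$\ord_{2}$-Toeplitz'' matrices of \emph{arbitrary} size, and its proof needs the strengthened hypothesis that in odd sizes the kernel vector has nonzero coordinate sum. That induction does close (in the parity-split, when the singular diagonal block has kernel $\langle u\rangle$ with $\mathbf{1}^{T}u\neq 0$, the image misses $\mathbf{1}$, forcing the relevant sum to vanish; the residual parity constraint from the alternating property rules out the one-dimensional kernel that might otherwise appear when both halves are invertible). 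So your proposal is sound, but note that what you call ``the same block analysis'' for the nonzero-sum assertion is where most of the actual work hides; it would be worth writing that step out explicitly.
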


\begin{proof}
We argue by induction on $Q$. When $Q=2$,  $M=\left(\rule{0pt}{3ex}\right.\raisebox{.5ex}{$\begin{array}{c@{\hspace{.5em}}c}0&0\\[-1ex]0&0\end{array}$}\left.\rule{0pt}{3ex}\right)$. When $Q\ge 4$, write $M$ as
\vspace{1ex}

\centerline{
\footnotesize
$ \left (%
\begin{array}{c@{\acs}c@{\acs}c}
M_{1} & M_{2} & M_{3}\\[-1ex]
M_{4} & N & M_{5}\\[-1ex]
M_{6} & M_{7} & M_{8}
\end{array}
\right )%
$
}

where $M_{1}$ and $M_{8}$ are $\frac{Q}{4}$ by $\frac{Q}{4}$ matrices.  Using the fact that $M$ is a special $Q$-matrix we find that $M_{1}=M_{8}$, $M_{2}=M_{7}$, $M_{3}=M_{6}$, $M_{4}=M_{5}$, that $M_{1}+M_{3}$ is a non-zero scalar matrix and that $N$ is a special $\frac{Q}{2}$-matrix. So we may write $M$ as
\vspace{2ex}

\centerline{
\footnotesize
$ \left (%
\begin{array}{c@{\acs}c@{\acs}c}
M_{1} & D & M_{3}\\[-1ex]
E & N & E\\[-1ex]
M_{3} & D & M_{1}
\end{array}
\right )%
$ \raisebox{-4ex}{.}
}

Making elementary row and column operations we get:
\vspace{4ex}

\centerline{
\footnotesize
$ \left (%
\begin{array}{c@{\acs}c@{\acs}c}
M_{1} & D & M_{1}+M_{3}\\[-1ex]
E & N & 0\\[-1ex]
M_{1}+M_{3} & 0 & 0
\end{array}
\right )%
$ \raisebox{-4ex}{.}
} %

Since $M_{1}+M_{3}$ is a non-zero scalar, further elementary operations yield:
\vspace{4ex}

\centerline{
\footnotesize
$ \left (%
\begin{array}{c@{\acs}c@{\acs}c}
0 & 0 & M_{1}+M_{3}\\[-1ex]
0 & N & 0\\[-1ex]
M_{1}+M_{3} & 0 & 0
\end{array}
\right )%
$ \raisebox{-4ex}{.}
} %

Then rank $M=$ rank $N+2\left(\frac{Q}{4}\right)$ which is $Q-2$ by the induction assumption.
\qed
\end{proof}

Now let $b(Q)=(b_{1},\ldots , b_{Q})$ be the row vector of Corollary \ref{corollary3.9}; $b_{i}=0$ if $2i-1\equiv 2Q-2i\mod{3}$ and is 1 otherwise. Let $b^{*}(Q)=(b_{1}^{*},\ldots , b_{Q}^{*})$ be defined as follows:  $b_{i}^{*}=1$ if $2i-1\equiv 2Q-2i\mod{3}$ and is 0 otherwise.  In other words, $b_{i}^{*}=1+b_{i}$. We need a modification of Theorem \ref{theorem4.2}.

\begin{theorem}
\label{theorem4.3}
Let $M$ be a special $Q$-matrix. Then the $Q+1$ by $Q$ and $Q+2$ by $Q$ matrices 
\vspace{1ex}

\centerline{
\footnotesize
\hfill
$ \left (%
\begin{array}{c}
M\\[1ex] \hline
b(Q)
\end{array}
\right )%
$
\qquad and \qquad
$ \left (%
\begin{array}{c}
M\\[1ex] \hline
b(Q)\\[1ex] \hline
b^{*}(Q)
\end{array}
\right )%
$
\hfill
}

have rank $Q-1$ and $Q$ respectively.
\end{theorem}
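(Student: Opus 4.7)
The plan is to prove Theorem \ref{theorem4.3} by induction on $Q$, mirroring the block reduction used in the proof of Theorem \ref{theorem4.2}. The base case $Q = 2$ is immediate: the only special $2$-matrix is the $2 \times 2$ zero matrix, while $b(2) = (1,0)$ and $b^*(2) = (0,1)$, so the augmented matrices manifestly have ranks $1$ and $2$.

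For the inductive step with $Q \ge 4$, I would first record two structural facts about the row vectors $b(Q)$ and $b^*(Q)$. Since $b_i$ depends only on $i \bmod 3$ and $3Q/4$ is divisible by $3$, one has $b_{3Q/4 + i} = b_i$, and likewise for $b^*$. Writing $b(Q) = (c_1, c_2, c_3)$ with blocks of lengths $Q/4$, $Q/2$, $Q/4$, this gives $c_1 = c_3$; call the common vector $c$. Since $b^*_i = 1 + b_i$ in characteristic $2$, one similarly gets $b^*(Q) = (c^*, c_2^*, c^*)$ with $c^* = (1,\ldots,1) + c$ and $c_2^* = (1,\ldots,1) + c_2$. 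The slightly more delicate observation is the identity $c_2 = b(Q/2)$ (whence also $c_2^* = b^*(Q/2)$): this reduces to checking that $Q/4 + j \equiv 2Q + 1 \pmod{3}$ iff $j \equiv Q + 1 \pmod{3}$, which follows because the two conditions differ by $3Q/4$, a multiple of $3$.

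With these decompositions in hand, I would apply to $\left(\frac{M}{b(Q)}\right)$ (and likewise to the doubly augmented matrix) the same sequence of row and column operations that reduced $M$ in the proof of Theorem \ref{theorem4.2}. The decisive step is the very first column operation, adding block column $1$ to block column $3$: because $c + c = 0$ and $c^* + c^* = 0$ in characteristic $2$, this kills the third block of each appended row. The subsequent row operations using the nonzero scalar matrix $M_1 + M_3$ in position $(1,1)$ then clear the first blocks of the appended rows, and the final column operation (clearing the $(3,2)$ block using the $(3,3)$ block) has no effect on the appended rows since their third blocks are already zero. The reduced matrix is block-diagonal, namely $\mathrm{diag}(M_1 + M_3,\, N,\, M_1 + M_3)$, with appended rows $(0,\, b(Q/2),\, 0)$ and (in the doubly augmented case) $(0,\, b^*(Q/2),\, 0)$. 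The total rank splits as a sum over column blocks, and the inductive hypothesis applied to the special $Q/2$-matrix $N$ supplies the middle-block rank: $Q/2 - 1$ in the single case and $Q/2$ in the double case. Adding $Q/4$ from each of the two outer blocks produces the desired totals $Q - 1$ and $Q$.

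The main obstacle is the mod-$3$ bookkeeping needed to identify $c_2$ with $b(Q/2)$; once this is set up correctly, the rest of the argument is a direct extension of the block reduction of Theorem \ref{theorem4.2}, with the characteristic-$2$ cancellation $c + c = 0$ doing all of the heavy lifting.
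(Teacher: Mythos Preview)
Your proposal is correct and follows essentially the same approach as the paper: induction on $Q$, the same base case, the same decomposition of $b(Q)$ into three blocks with the outer two equal by $3$-periodicity and the middle one identified with $b(Q/2)$, and the same block reduction borrowed from Theorem \ref{theorem4.2}. Your ordering of the elementary operations differs cosmetically (yielding a block-diagonal rather than block-antidiagonal form), and you spell out the tracking of the appended rows more explicitly than the paper does, but the argument is the same.
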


\begin{proof}
Again we argue by induction on $Q$. When $Q=2$, $b(Q)=(1,0)$ and $b^{*}(Q)=(0,1)$. Suppose $Q\ge4$. Write $b(Q)$ as a concatenation $(F_{0}|F_{1}|F_{2})$ where $F_{0}$ and $F_{2}$ have length $Q/4$. Since $b_{i+3}=b_{i}$, $F_{0}=F_{2}$, and one verifies that $F_{1}=b\left(\frac{Q}{2}\right)$. As in the proof of Theorem \ref{theorem4.2} we may write 
\vspace{2ex}

\centerline{
\footnotesize
\hfill
$ \left (%
\begin{array}{c}
M\\[1ex] \hline
b(Q)
\end{array}
\right )%
$
\qquad as \qquad
$ \left (%
\begin{array}{c@{\acs}c@{\acs}c}
M_{1} & D & M_{3}\\[-1ex]
E & N & E\\[-1ex]
M_{3} &D & M_{1}\\[-1ex]
F & b\left(\frac{Q}{2}\right) & F
\end{array}
\right )%
$
\hfill
}

The same elementary row and column operations that were performed in the proof of Theorem \ref{theorem4.2} take this matrix to 
\vspace{4ex}

\centerline{
\footnotesize
$ \left (%
\begin{array}{c@{\acs}c@{\acs}c}
0 & 0 & M_{1}+M_{3}\\[-1ex]
0 & N & 0\\[-1ex]
M_{1}+M_{3} &0 & 0\\[-1ex]
0 & b\left(\frac{Q}{2}\right) & 0
\end{array}
\right )%
$ \raisebox{-6ex}{.}
}

The rank of this matrix is 
\vspace{4ex}

\centerline{
\footnotesize
\hfill
$ \mbox{rank} \left (%
\begin{array}{c}
N\\[1ex] \hline
b\left(\frac{Q}{2}\right)
\end{array}
\right )%
+
2\left(\frac{Q}{4}\right)
$, \hfill
}

which is $Q-1$ by the induction assumption.  The calculation of the rank of 
\vspace{2ex}

\centerline{
\footnotesize
$ \left (%
\begin{array}{c}
M\\[1ex] \hline
b(Q)\\[1ex] \hline
b^{*}(Q)
\end{array}
\right )%
$
}
\vspace{-1ex}

is entirely similar.
\qed
\end{proof}

Suppose now that $\alpha \in L^{*}$ is algebraic over $Z/2$. We attach to $\alpha$ a $Q$ as follows:

\begin{definition}
\label{def4.4}
Write $\alpha = \lambda^{2}+\lambda$, and let $m=m(\alpha )$ be the degree of $\lambda$ over $Z/2$. Then $Q=2^{m-1}$. (Since $\alpha \ne 0$, $\lambda\not\in Z/2$ and $Q\ge 2$.)
\end{definition}

\begin{theorem}
\label{theorem4.5}
Let $Q$ be as in Definition \ref{def4.4}. Then the matrix $M$ of Corollary \ref{corollary3.9} is a special $Q$-matrix.
\end{theorem}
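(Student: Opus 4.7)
The plan is to compute $m_{i,j}$ explicitly from its definition in Corollary \ref{corollary3.9} and match against the two conditions of Definition \ref{def4.1}. By definition, $m_{i,j} = \sum \alpha^{s}$, summed over all $s$ dividing $Q = 2^{m-1}$ with $i \equiv j \pmod{3s}$; so $s$ ranges through powers of $2$, namely $1, 2, 4, \ldots, 2^{m-1}$.

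First I would observe that if $i \not\equiv j \pmod{3}$ the summation index set is empty (since $3 \mid 3s$), so $m_{i,j} = 0$. This takes care of half of Condition (1). The other half ($i = j$, so every $s \mid Q$ contributes) and all of Condition (2) will both be handled by a single telescoping identity. In characteristic $2$, writing $\alpha = \lambda^{2} + \lambda$ gives $\alpha^{2^{k}} = \lambda^{2^{k+1}} + \lambda^{2^{k}}$ (Frobenius), and hence for any $e \ge 0$,
\[
\sum_{k=0}^{e} \alpha^{2^{k}} \;=\; \sum_{k=0}^{e}\bigl(\lambda^{2^{k+1}} + \lambda^{2^{k}}\bigr) \;=\; \lambda^{2^{e+1}} + \lambda.
\]

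Now assume $i \equiv j \pmod{3}$ with $d = i - j \ne 0$. Since $3$ is odd, the condition $3s \mid d$ with $s$ a power of $2$ amounts to $s \mid 2^{e}$, where $e = \ord_{2}(d) = \ord_{2}(i-j)$. All such $s$ are automatically $\le Q$ because $|d| \le Q-1 < Q = 2^{m-1}$ forces $2^{e} \le |d|/3 < 2^{m-1}/3$, hence $e+1 \le m-1 < m$. Thus the telescoping identity gives
\[
m_{i,j} \;=\; \sum_{k=0}^{e} \alpha^{2^{k}} \;=\; \lambda^{2^{e+1}} + \lambda,
\]
which depends only on $\ord_{2}(i-j)$, as required. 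Moreover $\lambda^{2^{e+1}} + \lambda \ne 0$, for otherwise $\lambda$ would lie in $\mathbb{F}_{2^{e+1}}$, forcing $m \mid (e+1)$, which contradicts $e+1 < m$. Finally, for $i = j$, every divisor $s = 2^{k}$ with $0 \le k \le m-1$ contributes, so $m_{i,i} = \sum_{k=0}^{m-1} \alpha^{2^{k}} = \lambda^{2^{m}} + \lambda = 0$ because $\lambda \in \mathbb{F}_{2^{m}}$.

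I don't expect any real obstacle; the whole argument rests on the Artin–Schreier substitution $\alpha = \lambda^{2} + \lambda$ combined with Frobenius telescoping, and the only thing requiring a moment's care is checking the size bound $e+1 < m$ needed both to make $m_{i,j}$ nonzero when $i \ne j$ and (trivially) to force it to be zero when $i = j$. Everything else is bookkeeping about which powers $s$ enter the sum.
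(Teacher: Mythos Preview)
Your proposal is correct and follows essentially the same route as the paper: both arguments use the telescoping identity $\sum_{k=0}^{e}\alpha^{2^{k}}=\lambda^{2^{e+1}}+\lambda$ coming from $\alpha=\lambda^{2}+\lambda$, and then invoke $[\,\mathbb{F}_{2}(\lambda):\mathbb{F}_{2}\,]=m$ to see that this vanishes precisely when $e+1=m$ (the diagonal case) and is nonzero when $e+1<m$ (the off-diagonal case). The only cosmetic difference is that the paper bounds $e=\ord_{2}(i-j)$ directly from $|i-j|<Q$, whereas you route the bound through $3\cdot 2^{e}\mid(i-j)$; both yield $e+1<m$.
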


\begin{proof}
$m_{i,i}=\sum_{s/Q}\alpha^{s}=\sum_{s/Q}(\lambda^{s}+\lambda^{2s})=\lambda + \lambda^{2^{m}}$.  As the degree of $\lambda$ over $Z/2$ is $m$, each $m_{i,i}$ is 0.  When $i\not\equiv j\mod{3}$ there are no $s$ such that $i\equiv j\mod{3s}$, and so $m_{i,j}=0$. When $i\equiv j\mod{3}$, $i\ne j$, let $l=1+\ord_{2}(i-j)$. Then $m_{i,j}=\sum_{s/2^{l-1}}\alpha^{s}=\lambda + \lambda^{2^{l}}$. Since $\ord_{2}(i-j)<\ord_{2}(Q)$, $l<m$. Thus $m_{i,j}\ne 0$, and only depends on $l$.
\qed
\end{proof}

\begin{corollary}
\label{corollary4.6}
In the situation of Theorem \ref{theorem4.5}, $N_{6Q-5} \cap X$ is a 1-dimensional space.
\end{corollary}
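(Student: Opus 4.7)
The proof will be a short assembly of results already established. Recall from the discussion preceding Corollary \ref{corollary3.10} that $N_{6Q-5}\cap X$ identifies, via Lemma \ref{lemma3.6}, with the kernel of the induced map $D_{2Q-1}^{\mathit{sym}}\to D_{6Q-1}^{\mathit{sym}}\oplus L$. With respect to the bases $E_{1},\ldots ,E_{Q}$ and $F_{1},\ldots ,F_{Q},1$, Corollary \ref{corollary3.9} tells us that the matrix of this map is exactly
$$\left(\frac{M}{b(Q)}\right),$$
where $M$ is the $Q\times Q$ matrix with entries $m_{i,j}=\sum \alpha^{s}$ (sum over $s/Q$ with $i\equiv j \mod{3s}$) and $b(Q)$ is as defined there.

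The plan is to quote Theorem \ref{theorem4.5} to conclude that, for our chosen $Q=2^{m-1}$ attached to $\alpha$ via Definition \ref{def4.4}, the matrix $M$ is a special $Q$-matrix. Then Theorem \ref{theorem4.3} applies and gives rank $\left(\frac{M}{b(Q)}\right) = Q-1$. Since the source $D_{2Q-1}^{\mathit{sym}}$ has dimension $Q$, the kernel of the induced map has dimension $Q-(Q-1)=1$.

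Since this kernel is precisely $N_{6Q-5}\cap X$, we conclude that $N_{6Q-5}\cap X$ is 1-dimensional, as claimed. There is no genuine obstacle here; all the work was done in Theorems \ref{theorem4.2}, \ref{theorem4.3}, and \ref{theorem4.5} (together with the identification of the relevant kernel in section \ref{section3}). The corollary is simply the arithmetic statement $\dim\ker = Q - \operatorname{rank}\left(\frac{M}{b(Q)}\right) = 1$.
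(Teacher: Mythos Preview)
Your proof is correct and follows essentially the same approach as the paper's own proof: invoke Theorem \ref{theorem4.5} to see that $M$ is a special $Q$-matrix, apply Theorem \ref{theorem4.3} to get rank $Q-1$ for $\left(\frac{M}{b(Q)}\right)$, and use the identification of $N_{6Q-5}\cap X$ with the kernel of the induced map $D_{2Q-1}^{\mathit{sym}}\to D_{6Q-1}^{\mathit{sym}}\oplus L$ from section \ref{section3}. The paper's version is simply more terse.
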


\begin{proof}
Theorems \ref{theorem4.3} and \ref{theorem4.5} show that the matrix 
\vspace{2ex}

\centerline{
\footnotesize
$ \left (%
\begin{array}{c}
M\\[1ex] \hline
b(Q)
\end{array}
\right )%
$
}

of Corollary \ref{corollary3.9} has rank $Q-1$. So the induced map $D_{2Q-1}^{\mathit{sym}} \rightarrow D_{6Q-1}^{\mathit{sym}}\oplus L$ of the last section has 1-dimensional kernel. As we've seen this kernel identifies with $N_{6Q-5} \cap X$.
\qed
\end{proof}

Now let $u$ be a generator of $N_{6Q-5} \cap X$. Our next goal is to show that the co-efficient of $x^{4Q-2}y^{Q-2}z^{Q-1}$ in $u$ is non-zero.

\begin{lemma}
\label{lemma4.7}
If $u$ is in $X_{0}$, no monomial appearing in $u$ can have the exponent of $z$ equal to $Q-1$.
\end{lemma}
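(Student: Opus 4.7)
The plan is to reduce Lemma~\ref{lemma4.7} to a combinatorial statement about binary expansions. First I would invoke the observation in the proof of Lemma~\ref{lemma3.6}, which shows $X_0$ is spanned by the $[i,j]P^k$ with $0\le k\le Q-2$; since $[i,j]\in L[x,y]$ contains no $z$, it will suffice to show that for each such $k$ the coefficient of $z^{Q-1}$ in $P^k$, viewed as a polynomial in $L[x,y]$, vanishes.

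Next I would exploit characteristic-$2$ Frobenius: squaring $P=z^4+xyz^2+(x^3+y^3)z$ repeatedly yields
\[
P^{2^e} \;=\; z^{4\cdot 2^e} \;+\; (xy)^{2^e}\,z^{2\cdot 2^e} \;+\; (x^3+y^3)^{2^e}\,z^{2^e},
\]
so the only $z$-exponents appearing in $P^{2^e}$ are $2^e$, $2^{e+1}$, and $2^{e+2}$---each a single power of $2$. Writing $Q=2^m$ and expanding $k=\sum_{e\in S}2^e$ in binary with $S\subseteq\{0,\dots,m-1\}$, the identity $P^k=\prod_{e\in S}P^{2^e}$ shows that every monomial in the expansion of $P^k$ carries a $z$-exponent of the form $\sum_{e\in S}2^{e+d_e}$ with $d_e\in\{0,1,2\}$---in other words, a sum of $|S|$ (not necessarily distinct) powers of $2$.

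The question thus reduces to: can $2^m-1$ be written as such a sum of $|S|$ powers of $2$? The main obstacle, and the only nontrivial step I foresee, is the elementary inequality that the Hamming weight (number of $1$'s in binary) of any positive integer is bounded above by the number of summands in any representation of it as a sum of powers of $2$; this is proved by induction on the number of summands, using $2^f+2^f=2^{f+1}$ to merge equal pairs. But $k\le Q-2=2^m-2$ forces $|S|\le m-1$ (only $k=2^m-1$ has all $m$ bits set), whereas $2^m-1$ has Hamming weight exactly $m$. The two bounds are incompatible, so no representation exists, the sum defining the coefficient of $z^{Q-1}$ in $P^k$ is empty, and Lemma~\ref{lemma4.7} follows.
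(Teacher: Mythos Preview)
Your proof is correct and follows essentially the same approach as the paper's: reduce to $P^k$ for $k\le Q-2$, use the characteristic-$2$ Frobenius to factor $P^k$ as a product of $P^{2^e}$'s over the bits of $k$, observe that each factor contributes a single power of $2$ to the $z$-exponent, and conclude via the Hamming-weight bound. The paper phrases the last step as ``$Q-1$ is a sum of $m-2$ or fewer distinct powers of $2$, which is impossible'' (with $Q=2^{m-1}$ in its notation), while you spell out the merging-of-equal-pairs argument explicitly; otherwise the arguments coincide.
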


\begin{proof}
It's enough to show that no monomial appearing in $P^{k}$, $0\le k\le Q-2$, can have the exponent of $z$ equal to $Q-1$. Write $k$ as $\sum_{1}^{l}b_{i}$ where the $b_{i}$ are distinct powers of 2. Since $k<Q-1$, $l$ is at most $m-2$. Now $P^{k}$ is the product of $\left(z^{4}+xyz^{2}+(x^{3}+y^{3})z\right)^{b_{i}}$. This is a sum of terms, each of the form $\mbox{(an element of }Z/2[x,y]\mbox{)}\cdot z^{\sum a_{i}b_{i}}$ with each $a_{i}=1$, 2 or 4.  So if the result fails, $Q-1$ is a sum of $m-2$ or fewer powers of 2. Then $Q-1$ is a sum of $m-2$ or fewer distinct powers of 2, which is impossible.
\qed
\end{proof}

\begin{definition}
\label{def4.8}
If $v\in D_{2Q-1}^{\mathit{sym}}$, $\rho(v)$ is the co-efficient of $x^{4Q-2}y^{Q-2}z^{Q-1}$ in a pull-back of $v$ to $X$; by Lemma \ref{lemma4.7} this is independent of the choice of the pull-back.
\end{definition}

Now a pull-back of $E_{i}$ to $X$ is $E_{i}P^{Q-1}=E_{i}\left( (x^{3}+y^{3})z+xyz^{2}+z^{4}\right)^{Q-1}$.  So $\rho (E_{i})$ is the co-efficient of $x^{4Q-2}y^{Q-2}$ in $E_{i}(x^{3}+y^{3})^{Q-1}$. By Lemma \ref{lemma2.6} this is just the $b_{i}^{*}$ defined after Theorem \ref{theorem4.2}.

\begin{theorem}
\label{theorem4.9}
If $u$ is a generator of $N_{6Q-5}\cap X$, the co-efficient of \linebreak $x^{4Q-2}y^{Q-2}z^{Q-1}$ in $u$ is $\ne 0$.
\end{theorem}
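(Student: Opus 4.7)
The plan is to pass through the quotient $X \to D_{2Q-1}^{\mathit{sym}}$ used in section \ref{section3} and convert the question about the coefficient of $x^{4Q-2}y^{Q-2}z^{Q-1}$ in $u$ into a linear-algebra statement about the matrices $M$, $b(Q)$ and $b^{*}(Q)$, which has already been set up by Theorem \ref{theorem4.3}.

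First I would let $\bar{u}$ denote the image of $u$ in $D_{2Q-1}^{\mathit{sym}}$ under the map of Definition \ref{def3.4}. Since $g_{\alpha}$ is injective on $X_{0}$ (Lemma \ref{lemma3.6}) and $u \in N_{6Q-5}$, we cannot have $u \in X_{0}$, so $\bar{u} \ne 0$; and under the identification of $N_{6Q-5}\cap X$ with the kernel of $D_{2Q-1}^{\mathit{sym}} \rightarrow D_{6Q-1}^{\mathit{sym}}\oplus L$ the vector $\bar{u}$ lies in that kernel. Write $\bar{u}=\sum c_{i}E_{i}$; then by Corollary \ref{corollary3.9} the column vector $c=(c_{1},\ldots ,c_{Q})^{T}$ is a nonzero element of the kernel of the $(Q+1)\times Q$ matrix $\left(\frac{M}{b(Q)}\right)$.

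Next, because $u$ itself is a pull-back of $\bar{u}$ to $X$, the functional $\rho$ of Definition \ref{def4.8} satisfies
\[
\rho(\bar{u}) = \text{coefficient of } x^{4Q-2}y^{Q-2}z^{Q-1} \text{ in } u.
\]
So the theorem reduces to showing $\rho(\bar{u})\ne 0$. By the computation immediately after Definition \ref{def4.8} (which uses Lemma \ref{lemma2.6}), $\rho(E_{i})=b_{i}^{*}$, so $\rho(\bar{u})=\sum c_{i}b_{i}^{*}=b^{*}(Q)\cdot c$.

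Finally I would invoke Theorem \ref{theorem4.3}: the $(Q+2)\times Q$ matrix obtained by adjoining both $b(Q)$ and $b^{*}(Q)$ to $M$ has rank $Q$, so its kernel is zero. Since $c$ is annihilated by $M$ and $b(Q)$ but is itself nonzero, it cannot also be annihilated by $b^{*}(Q)$; hence $b^{*}(Q)\cdot c \ne 0$, giving $\rho(\bar{u})\ne 0$ as required. The only piece that might trip one up is verifying that $u$ really is a legitimate pull-back of $\bar{u}$ (so that Definition \ref{def4.8} applies to $u$ itself) — this is immediate from the construction, but it is the step that makes the rank statement of Theorem \ref{theorem4.3} actually compute the coefficient we care about.
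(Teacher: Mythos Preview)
Your proof is correct and follows essentially the same route as the paper's: pass to $\bar{u}\in D_{2Q-1}^{\mathit{sym}}$, note it is nonzero and lies in the kernel of $\left(\frac{M}{b(Q)}\right)$, identify the coefficient in question with $\rho(\bar{u})=b^{*}(Q)\cdot c$, and use the rank-$Q$ statement of Theorem \ref{theorem4.3} (via Theorem \ref{theorem4.5}) to conclude $b^{*}(Q)\cdot c\ne 0$. The only difference is presentational---you spell out the coordinate vector $c$ explicitly where the paper phrases the same step as injectivity of the combined map $D_{2Q-1}^{\mathit{sym}}\to (D_{6Q-1}^{\mathit{sym}}\oplus L)\oplus L$.
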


\begin{proof}
Combining the map $D_{2Q-1}^{\mathit{sym}} \rightarrow D_{6Q-1}^{\mathit{sym}}\oplus L$ induced by $X\rightarrow (Y,\Delta )$ with $\rho$ we get a map $D_{2Q-1}^{\mathit{sym}} \rightarrow (D_{6Q-1}^{\mathit{sym}}\oplus L)\oplus L$. The discussion above, combined with Theorem \ref{theorem4.5}, shows that with respect to the obvious bases the matrix of this map is

\centerline{
$ \left (%
\begin{array}{c}
M\\[1ex] \hline
b(Q)\\[1ex] \hline
b^{*}(Q)
\end{array}
\right )%
$
}

where $M$ is a special $Q$-matrix. By Theorem \ref{theorem4.3} this matrix has rank $Q$; consequently $D_{2Q-1}^{\mathit{sym}} \rightarrow (D_{6Q-1}^{\mathit{sym}}\oplus L)\oplus L$ is 1--1. The image, $\bar{u}$, of $u$ in $D_{2Q-1}^{\mathit{sym}}$ is $\ne 0$. Since the image of $\bar{u}$ in $D_{6Q-1}^{\mathit{sym}}\oplus L$ is 0, $\rho(\bar{u})\ne 0$, giving the theorem.
\qed
\end{proof}

\begin{theorem}
\label{theorem4.10}
Suppose $\alpha \ne 0$ is algebraic over $Z/2$, $R_{\alpha}=L[x,y,z]/g_{\alpha}$, $f=y^{3}z^{3}$ and $I=(x^{4},y^{4},z^{4})$. Then there is a $Q$ such that $xyf^{Q}\not\in I^{[Q]}$ in $R_{\alpha}$.
\end{theorem}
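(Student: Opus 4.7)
The plan is to imitate the duality argument used in Corollary \ref{corollary3.14}, but in the opposite direction: rather than deducing that a map is surjective from the injectivity of its dual, I will deduce non-surjectivity from a non-trivial kernel. The key observation is that the element $xyf^{Q} = x\,y^{3Q+1}z^{3Q}$ sits in degree $6Q+2$, and the question whether $xyf^{Q}\in I^{[Q]}$ in $R_{\alpha}$ is (since everything in sight is homogeneous) the same as the question whether $xyf^{Q}$ lies in the image of $g_{\alpha}:O_{6Q-2}\to O_{6Q+2}$.

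First I would choose $Q$ to be the power of $2$ attached to $\alpha$ by Definition \ref{def4.4}. Next I would recall from Corollary \ref{corollary3.14} that $O_{12Q-3}$ is 1-dimensional, spanned by $(xyz)^{4Q-1}$, and that multiplication gives a perfect pairing $O_{6Q-5}\times O_{6Q+2}\to L$ under which $g_{\alpha}:O_{6Q-5}\to O_{6Q-1}$ and $g_{\alpha}:O_{6Q-2}\to O_{6Q+2}$ are dual. Consequently, $xyf^{Q}$ lies in the image $g_{\alpha}O_{6Q-2}$ of the second map if and only if the pairing $\langle v, xyf^{Q}\rangle$ vanishes for every $v\in N_{6Q-5}$.

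Now by Corollary \ref{corollary4.6}, $N_{6Q-5}\cap X$ is 1-dimensional; let $u$ be a generator. A direct computation of the pairing gives
\[
\langle u, xyf^{Q}\rangle \;=\; [\,(xyz)^{4Q-1}\,]\bigl(u\cdot x\,y^{3Q+1}\,z^{3Q}\bigr) \;=\; [\,x^{4Q-2}y^{Q-2}z^{Q-1}\,]\,u,
\]
where $[\,m\,]\,w$ denotes the coefficient of the monomial $m$ in $w$. But Theorem \ref{theorem4.9} says precisely that this coefficient is non-zero. Hence $xyf^{Q}\notin g_{\alpha}O_{6Q-2}$ in $O_{6Q+2}$, i.e.\ $xyf^{Q}\notin (x^{4Q},y^{4Q},z^{4Q},g_{\alpha})$ in $L[x,y,z]$, which is the desired conclusion in $R_{\alpha}$.

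The entire substance of the argument lies in Theorems \ref{theorem4.3} and \ref{theorem4.9}, which were already established; the proof of Theorem \ref{theorem4.10} itself amounts to unwinding the definition of the pairing and matching exponents. There is therefore no serious obstacle at this stage beyond the bookkeeping of checking that $(4Q-1,4Q-1,4Q-1)-(1,3Q+1,3Q) = (4Q-2,Q-2,Q-1)$, which is exactly the monomial singled out in Definition \ref{def4.8} and Theorem \ref{theorem4.9}.
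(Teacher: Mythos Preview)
Your proposal is correct and is essentially the same argument as the paper's, just phrased in the language of the duality pairing rather than directly: the paper simply notes that $u\cdot xyf^{Q}\ne 0$ in $O$ (by the same exponent bookkeeping you did) and that $g_{\alpha}u=0$ forces $xyf^{Q}\notin g_{\alpha}O$, which is your duality statement unwound.
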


\begin{proof}
Take $Q$ as in Definition \ref{def4.4}. Let $u$ be as in Theorem \ref{theorem4.9}. Then the co-efficient of $(xyz)^{4Q-1}$ in $uxyf^{Q}$ is the co-efficient of $x^{4Q-2}y^{Q-2}z^{Q-1}$ in $u$, which is $\ne 0$ by Theorem \ref{theorem4.9}. So $uxyf^{Q}\ne 0$ in $O$. Since $g_{\alpha}u=0$, $xyf^{Q} \not\in g_{\alpha}O$. In other words, $xyf^{Q}\not\in (x^{4Q}, y^{4Q}, z^{4Q}, g_{\alpha})$ in $L[x,y,z]$. Now pass to $R_{\alpha}$.
\qed
\end{proof}

\section{Test elements}

\begin{definition}
\label{def5.1}
$c\ne 0$ in $R_{\alpha}$ is a ``test element'' if whenever $J$ is an ideal of $R_{\alpha}$ and $h\in J^{*}$, then $ch^{q}\in J^{[q]}$ for all $q$.
\end{definition}

\begin{remark}
\label{remark1}
Suppose that for each $\alpha\ne 0$, $xy$ is a test element in $R_{\alpha}$. Then the localization problem has a negative solution. To see this, take $L$ algebraically closed of characteristic 2, $P=z^{4}+xyz^{2}+(x^{3}+y^{3})z$, $P_{1}=x^{2}y^{2}$, $I=(x^{4},y^{4},z^{4})$ and $f=y^{3}z^{3}$. We saw in section \ref{section3} that $f\in I^{*}$ in $\rgen$. If $\alpha\ne 0$ in $L$ is algebraic over $Z/2$ then $R_{\alpha}$ is a domain, and Theorem \ref{theorem4.10} shows that $xyf^{Q}\not\in I^{[Q]}$ for some $Q$. Since $xy$ is a test element in $R_{\alpha}$, $f\not\in I^{*}$ in $R_{\alpha}$. As there are infinitely many such $\alpha$, Brenner's Theorem \ref{theorem1.3} gives the result.
\end{remark}

\begin{remark}
\label{remark2}
In fact, $xy$ is a test element in each $R_{\alpha}$. Since each $g_{\alpha}$, $\alpha\ne 0$, defines a smooth plane quartic this is a special case of the following deep result of Brenner---Let $A=L[x,y,z]/g$ where $\charstic L=p$ and $g$ is a form of degree $r$ defining a smooth projective plane curve. Let $J$ be an ideal of $A$ and $h\in J^{*}$. Then if $c\in A$ is homogeneous of degree $> r-3+\frac{r-3}{p}$, $ch^{q}\in J^{[q]}$ for all $q$.
\end{remark}

But the proof of this result is deep, using homological algebra, vector bundle theory and an ampleness criterion of Hartshorne and Mumford. It can't be a part of any short self-contained treatment of our counterexample, and in this exposition I'll take another route. For clarity write $\theta$ for the image of $z$ in $A=R_{\alpha}=L[x,y,z]/g_{\alpha}$, so that $A=L[x,y,\theta]$.

\begin{lemma}
\label{lemma5.2}
For each power $q$ of 2 and each $j$, $(x^{3}+y^{3})^{q-1}\theta ^{j}\in L[x,y,\theta^{q}]$.
\end{lemma}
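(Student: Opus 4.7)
The plan is to induct on $q$. The base case $q=1$ is trivial since $(x^3+y^3)^0\theta^j = \theta^j$ lies in $L[x,y,\theta]$. For the inductive step, suppose the statement holds for some power $Q$ of $2$; I want to deduce it for $2Q$. The key observation, which I expect to carry the proof, is a Frobenius trick available in characteristic $2$: raising the defining relation $\theta^4 + xy\theta^2 + (x^3+y^3)\theta + \alpha x^2y^2 = 0$ to the $Q$-th power gives
\[
\bigl((x^3+y^3)\theta\bigr)^Q \;=\; \theta^{4Q} + (xy)^Q\theta^{2Q} + \alpha^Q(xy)^{2Q},
\]
and the left side is $(x^3+y^3)^Q\theta^Q$. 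Since $\theta^{4Q} = (\theta^{2Q})^2$, the right side is visibly in $L[x,y,\theta^{2Q}]$; so $(x^3+y^3)^Q\theta^Q \in L[x,y,\theta^{2Q}]$.

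Now factor $(x^3+y^3)^{2Q-1}\theta^j = (x^3+y^3)^Q \cdot (x^3+y^3)^{Q-1}\theta^j$. By the inductive hypothesis, $(x^3+y^3)^{Q-1}\theta^j$ can be written as $\sum_k a_k (\theta^Q)^k$ with $a_k \in L[x,y]$. Substituting, the problem reduces to showing that each term $(x^3+y^3)^Q \theta^{kQ}$ lies in $L[x,y,\theta^{2Q}]$. For even $k$ this is immediate since $\theta^{kQ}$ is a power of $\theta^{2Q}$ and $(x^3+y^3)^Q \in L[x,y]$. For odd $k$, write $\theta^{kQ} = \theta^Q \cdot (\theta^{2Q})^{(k-1)/2}$, and apply the identity from the previous paragraph to absorb $(x^3+y^3)^Q \theta^Q$ into $L[x,y,\theta^{2Q}]$.

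The only real content is the Frobenius identity combined with the defining equation of $R_\alpha$; everything else is bookkeeping. I do not anticipate any serious obstacle, since the induction step is essentially forced by the structure of $g_\alpha$ together with the fact that $\theta^{4Q}$ is already a power of $\theta^{2Q}$.
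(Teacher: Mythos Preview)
Your proof is correct and follows essentially the same route as the paper's: establish via the defining relation (raised to the $Q$-th power in characteristic $2$) that $(x^{3}+y^{3})^{Q}\theta^{Q}\in L[x,y,\theta^{2Q}]$, then factor $(x^{3}+y^{3})^{2Q-1}\theta^{j}=(x^{3}+y^{3})^{Q}\cdot\bigl((x^{3}+y^{3})^{Q-1}\theta^{j}\bigr)$ and apply the inductive hypothesis. The only cosmetic difference is that the paper first records the $q=2$ case explicitly and then takes $q$-th powers to get $(x^{3}+y^{3})^{q}(\theta^{q})^{j}\in L[x,y,\theta^{2q}]$ for all $j$ at once, whereas you split into even and odd $k$; the content is identical.
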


\begin{proof}
$q=1$ is clear. If $q=2$, we may assume $j=1$. But $(x^{3}+y^{3})\theta=\alpha x^{2}y^{2}+xy\theta^{2}+\theta^{4}$, giving the result.  Taking $q^{\mathrm{th}}$ powers we find that $(x^{3}+y^{3})^{q}\cdot(\theta^{q})^{j}\in L[x,y,\theta^{2q}]$. We can now prove the lemma by induction on $q$: $(x^{3}+y^{3})^{2q-1}\cdot\theta^{j}=(x^{3}+y^{3})^{q}\cdot\left((x^{3}+y^{3})^{q-1}\theta^{j}\right)\in (x^{3}+y^{3})^{q}\cdot L[x,y,\theta^{q}]$. But each $(x^{3}+y^{3})^{q}\cdot (\theta^{q})^{j}$ is in $L[x,y,\theta^{2q}]$.
\qed
\end{proof}

Now assume that $L$ is algebraically closed. The arguments that follow are made working in an algebraic closure of the field $L(x,y,\theta )$.

\begin{lemma}
\label{lemma5.3}
Suppose $d\ne 0$ is in $L[x,y]$. Then for each large power, $r$, of 2 there is an $A$-linear map $\gamma: L[x^{\frac{1}{r}},y^{\frac{1}{r}},\theta^{\frac{1}{r}}]\rightarrow A$ taking $d^{\frac{1}{r}}$ to $x^{3}+y^{3}$.
\end{lemma}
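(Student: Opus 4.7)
The plan is to realize $\gamma$ as a composition $\gamma_2 \circ \mu$, where $\mu : L[x^{1/r},y^{1/r},\theta^{1/r}] \to L[x^{1/r},y^{1/r},\theta]$ is multiplication by $(x^3+y^3)^{(r-1)/r}$ and $\gamma_2 : L[x^{1/r},y^{1/r},\theta] \to A$ is a suitably chosen $A$-linear map.

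First, $\mu$ actually lands in the stated target: Lemma \ref{lemma5.2} with $q=r$, together with the fact that $A$ is $L[x,y]$-spanned by powers of $\theta$, yields $(x^3+y^3)^{r-1}\cdot A \subseteq L[x,y,\theta^r]$. Taking $r$-th roots in the algebraic closure --- valid because in characteristic $2$ Frobenius distributes over sums and $L$ is perfect --- gives $(x^3+y^3)^{(r-1)/r}\cdot L[x^{1/r},y^{1/r},\theta^{1/r}] \subseteq L[x^{1/r},y^{1/r},\theta]$. Hence $\mu$ is well defined and $A$-linear, with $\mu(d^{1/r}) = e := ((x^3+y^3)^{r-1}d)^{1/r}$ lying in $L[x^{1/r},y^{1/r}]$.

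For $\gamma_2$: the target of $\mu$ equals $A \otimes_{L[x,y]} L[x^{1/r},y^{1/r}]$, a free $A$-module with basis $\{x^{a/r}y^{b/r} : 0 \le a,b < r\}$. Expanding $e = \sum c_{ab}\,x^{a/r}y^{b/r}$ with $c_{ab} \in L[x,y]$, the image $\gamma_2(e)$ can be made equal to any element of the $A$-ideal $(c_{ab})\cdot A$; since $A$ is free over $L[x,y]$, this ideal meets $L[x,y]$ exactly in the $L[x,y]$-ideal $(c_{ab})$. So the whole construction succeeds provided $x^3+y^3 \in (c_{ab})_{L[x,y]}$ for $r$ large.

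The heart of the argument is this last ideal membership, and is the main obstacle. Raising $e$ to the $r$-th power gives $e^r = (x^3+y^3)^{r-1}d = \sum_{k=0}^{r-1} x^{3k} y^{3(r-1-k)}\,d$, where the binomial coefficients ${r-1 \choose k}$ are all $1$ in characteristic $2$ because $r-1 = 2^m-1$. Each $c_{ab}$ is obtained by grouping the monomials $x^i y^j$ of $e^r$ with $i \equiv a$, $j \equiv b \mod{r}$ and taking an $r$-th root. I would focus on the extreme contributions $k = 0$ and $k = r-1$ applied to each monomial $\lambda_l x^{i_l}y^{j_l}$ of $d$: these produce in $e^r$ the monomials $\lambda_l x^{i_l}y^{3r-3+j_l}$ and $\lambda_l x^{3r-3+i_l}y^{j_l}$, whose integer quotients mod $r$ are pure $y$- or $x$-powers of degree $2$ or $3$. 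Provided $r$ is larger than the exponents appearing in $d$, the resulting $c_{ab}$-entries contain pure monomials $\lambda_l^{1/r} x^q$ and $\lambda_l^{1/r} y^{q'}$ for $q,q' \in \{2,3\}$, possibly combined with cross-contributions from monomials of $d$ of equal total degree (which in characteristic $2$ never cancel). A routine bookkeeping argument then shows $(c_{ab})_{L[x,y]}$ contains $x^3$ and $y^3$, or in degenerate cases of collision still contains $x^3+y^3 = (x+y)(x^2+xy+y^2)$ via factors like $x+y$.
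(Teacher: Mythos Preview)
Your decomposition $\gamma=\gamma_2\circ\mu$ and the verification that multiplication lands in $L[x^{1/r},y^{1/r},\theta]$ are fine, but your choice of multiplier makes the rest much harder than necessary, and the final step is not actually carried out.

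The paper takes $\mu$ to be multiplication by $(x^3+y^3)$ itself, not by $(x^3+y^3)^{(r-1)/r}$. This costs nothing (Lemma~\ref{lemma5.2} still gives $(x^3+y^3)\,L[x^{1/r},y^{1/r},\theta^{1/r}]\subset L[x^{1/r},y^{1/r},\theta]$), and it buys everything: since $(x^3+y^3)\in A$, it passes through any $A$-linear projection $p$, so $\gamma(d^{1/r})=p\bigl((x^3+y^3)d^{1/r}\bigr)=(x^3+y^3)\,p(d^{1/r})$. Now one only needs $p(d^{1/r})=1$. For $r$ exceeding every exponent in $d$, the element $d^{1/r}=\sum\lambda_l^{1/r}x^{i_l/r}y^{j_l/r}$ is an $L$-linear combination of distinct basis monomials; after scaling $d$ so some $\lambda_l=1$, the corresponding coordinate projection does the job. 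No ideal computation is needed.

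By contrast, your multiplier $(x^3+y^3)^{(r-1)/r}\notin A$ scrambles the coefficients of $d^{1/r}$, forcing you to analyse the ideal $(c_{ab})$ generated by the coordinates of $e=((x^3+y^3)^{r-1}d)^{1/r}$. Your sketch of this analysis is not a proof: the assertion that cross-contributions ``in characteristic~$2$ never cancel'' is unsupported (nothing prevents an even number of monomials of $(x^3+y^3)^{r-1}d$ from landing in the same residue class), and the closing ``routine bookkeeping argument'' is precisely the content that is missing. The desired membership $x^3+y^3\in(c_{ab})$ is in fact true --- one clean way is to observe that $e\,(x^{3/r}+y^{3/r})=(x^3+y^3)\,d^{1/r}$, so every basis coefficient of the right-hand side lies in $(c_{ab})$, and for large $r$ those coefficients are exactly the $\lambda_l^{1/r}(x^3+y^3)$ --- but this is essentially the paper's observation in disguise, and it is not what you wrote.
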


\begin{proof}
$(x^{3}+y^{3})^{r}\cdot \theta^{j}\in L[x,y,\theta^{r}]$. So $(x^{3}+y^{3})\theta^{\frac{j}{r}}\in  L[x^{\frac{1}{r}},y^{\frac{1}{r}},\theta]$, and  $(x^{3}+y^{3})L[x^{\frac{1}{r}},y^{\frac{1}{r}},\theta^{\frac{1}{r}}]\subset L[x^{\frac{1}{r}},y^{\frac{1}{r}},\theta]$. Now the $x^{\frac{i}{r}}y^{\frac{j}{r}}$, $i$ and $j<r$, form a basis of $L[x^{\frac{1}{r}},y^{\frac{1}{r}}]$ over $L[x,y]$. Since $\theta$ is separable over $L[x,y]$ they are also a basis of $L[x^{\frac{1}{r}},y^{\frac{1}{r}},\theta]$ over $L[x,y,\theta]$ (and of $L(x^{\frac{1}{r}},y^{\frac{1}{r}},\theta^{\frac{1}{r}})=L(x^{\frac{1}{r}},y^{\frac{1}{r}},\theta)$ over $L(x,y,\theta)$). We may assume that some monomial appearing in $d$ has co-efficient 1. Since $r$ is large, $d^{\frac{1}{r}}$ is an $L$-linear combination of our basis elements $x^{\frac{i}{r}}y^{\frac{j}{r}}$; also one of the projection maps $p: L(x^{\frac{1}{r}},y^{\frac{1}{r}},\theta)\rightarrow L(x,y,\theta)$ takes $d^{\frac{1}{r}}$ to 1. Let $\gamma$ be the map $u\rightarrow p\left((x^{3}+y^{3})u\right)$. Then $\gamma(d^{\frac{1}{r}})=x^{3}+y^{3}$. Since $(x^{3}+y^{3})\cdot L[x^{\frac{1}{r}},y^{\frac{1}{r}},\theta^{\frac{1}{r}}]\subset L[x^{\frac{1}{r}},y^{\frac{1}{r}},\theta]$, and each of the projection maps takes this last ring into $L[x,y,\theta]=A$, we're done.
\qed
\end{proof}

\begin{lemma}
\label{lemma5.4}
If $L$ is algebraically closed, $x^{3}+y^{3}$ is a test element in $A=R_{\alpha}$.
\end{lemma}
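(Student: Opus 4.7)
The goal is: whenever $h \in J^{*}$ for some ideal $J \subset A$, we have $(x^{3}+y^{3}) h^{q} \in J^{[q]}$ for every power $q$ of $2$. By the definition of tight closure, fix a nonzero $d \in A$ with $d h^{q} \in J^{[q]}$ for all $q$; the plan is to use Lemma \ref{lemma5.3} to trade this unknown $d$ for the explicit element $x^{3}+y^{3}$.

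I would first replace $d$ by an element of $L[x,y]$. Since $\theta$ satisfies the monic relation $\theta^{4}+xy\theta^{2}+(x^{3}+y^{3})\theta+\alpha x^{2}y^{2}=0$, the ring $A$ is a free $L[x,y]$-module of rank $4$, and the $L[x,y]$-norm $N(d)$ is a nonzero element of $L[x,y]$ that is an $A$-multiple of $d$. Replacing $d$ by $N(d)$ preserves the hypothesis $d h^{q} \in J^{[q]}$, so we may assume $d \in L[x,y]$.

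Now fix $q$ and pick a power $r$ of $2$ large enough for Lemma \ref{lemma5.3} to supply an $A$-linear map $\gamma: L[x^{1/r},y^{1/r},\theta^{1/r}] \to A$ with $\gamma(d^{1/r})=x^{3}+y^{3}$. The hypothesis at exponent $qr$ reads $d h^{qr} \in J^{[qr]}=(J^{[q]})^{[r]}$, so $d h^{qr}=\sum_{i} c_{i} a_{i}^{r}$ with $a_{i} \in J^{[q]}$ and $c_{i} \in A$. Taking $r$-th roots---well-defined inside the purely inseparable overring $A^{1/r}=L[x^{1/r},y^{1/r},\theta^{1/r}]$ because Frobenius is injective on the domain $A$ (and $L$ is perfect, being algebraically closed)---yields $d^{1/r} h^{q}=\sum c_{i}^{1/r} a_{i}$, which lies in the $A$-submodule $J^{[q]} \cdot A^{1/r}$ of $A^{1/r}$. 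Applying the $A$-linear map $\gamma$, the left-hand side maps to $h^{q} \gamma(d^{1/r})=(x^{3}+y^{3}) h^{q}$ while the right-hand side maps into $J^{[q]} \cdot A = J^{[q]}$. This gives $(x^{3}+y^{3}) h^{q}\in J^{[q]}$, as desired.

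The only delicate point is the ``take $r$-th roots'' step: one must work inside the correctly chosen purely inseparable overring, verify that $(d h^{qr})^{1/r}=d^{1/r} h^{q}$ is unambiguous there, and observe that the resulting sum genuinely lies in the $A$-submodule $J^{[q]} \cdot A^{1/r}$ on which $\gamma$ acts $A$-linearly. Once this bookkeeping is settled, Lemma \ref{lemma5.3} finishes the job with no further work.
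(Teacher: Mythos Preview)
Your proposal is correct and follows essentially the same approach as the paper: reduce to $d\in L[x,y]$, invoke Lemma~\ref{lemma5.3} to get an $A$-linear $\gamma$ with $\gamma(d^{1/r})=x^{3}+y^{3}$, take $r$-th roots of $dh^{qr}\in J^{[qr]}$ inside $L[x^{1/r},y^{1/r},\theta^{1/r}]$, and apply $\gamma$. The only cosmetic difference is that you justify the reduction $d\in L[x,y]$ via the norm, whereas the paper simply remarks that one may replace $d$ by an $A$-multiple.
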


\begin{proof}
Suppose $J$ is an ideal of $R_{\alpha}$ and $h\in J^{*}$. Then $dh^{q}\in J^{[q]}$ for some $d\ne 0$ and all $q$. We may replace $d$ by any $A$-multiple and may assume $d\ne 0$ is in $L[x,y]$. Choose $r$ and $\gamma$ as in Lemma \ref{lemma5.3}.  Then $dh^{qr}\in J^{[qr]}$ and so $d^{\frac{1}{r}}h^{q}\in J^{[q]}\cdot L[x^{\frac{1}{r}},y^{\frac{1}{r}},\theta^{\frac{1}{r}}]$.  Applying $\gamma$ we find that $(x^{3}+y^{3})h^{q}\in J^{[q]}$ for all $q$.
\qed
\end{proof}

In the next section we'll use the elementary Lemma \ref{lemma5.4} in place of Brenner's test element theorem to complete the exposition of the counterexample.

\section{$\bm{H^{2}}$---completion of the proof}

Our goal is:

\begin{lemma}
\label{lemma6.1}
Suppose $L$ is algebraically closed. Let $I$ be the ideal $(x^{4},y^{4},z^{4})$ of $A=R_{\alpha}=L[x,y,z]/g_{\alpha}$, $\alpha \ne 0$. Suppose that $c$ and $f$ are homogeneous elements of $R_{\alpha}$ of degrees 2 and 6. Then if $cf^{Q}\not\in I^{[Q]}$ for some $Q$, $f\not\in I^{*}$.
\end{lemma}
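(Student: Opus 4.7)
The plan is to argue the contrapositive: assume $f \in I^*$ in $R_\alpha$, and deduce $cf^Q \in I^{[Q]}$ for every power of $2$, $Q$. By Lemma~\ref{lemma5.4}, since $x^3 + y^3$ is a test element in $A = R_\alpha$, this assumption yields $(x^3+y^3) f^q \in I^{[q]}$ in $A$ for every power of $2$, $q$.

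My first move is to reinterpret both the hypothesis and the goal in terms of the graded local cohomology module $H^2_m(A)$, where $m = (x,y,z)$. Since $A$ is a $2$-dimensional Cohen--Macaulay graded domain and $x,y$ is a regular sequence in $A$ (reducing $g_\alpha$ modulo $x$ gives $z(z+y)(z^2+yz+y^2)$, confirming $y$ remains a non-zero-divisor on $A/xA$), one has the \v{C}ech description
\[
H^2_m(A) = A[(xy)^{-1}] / \bigl( A[x^{-1}] + A[y^{-1}] \bigr),
\]
a graded $A$-module carrying a Frobenius action $F[a/(x^n y^n)] = [a^2/(x^{2n} y^{2n})]$. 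Under this translation, $h \in (x^{4n}, y^{4n}, z^{4n})$ in $A$ is equivalent to $[h/(x^{4n} y^{4n})] \in A \cdot \zeta_n$, where $\zeta_n := [z^{4n}/(x^{4n} y^{4n})]$. Setting $\eta := [f/(x^4 y^4)] \in H^2_m(A)$, one has $F^e(\eta) = [f^{2^e}/(x^{4 \cdot 2^e} y^{4 \cdot 2^e})]$; so the hypothesis becomes $(x^3+y^3) \cdot F^e(\eta) \in A\zeta_{2^e}$ for every $e \ge 0$, and the goal becomes $c \cdot F^{e_0}(\eta) \in A\zeta_Q$ for $Q = 2^{e_0}$.

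Applying the hypothesis at $e = e_0$ and multiplying by $c$ yields $(x^3+y^3)(c F^{e_0}(\eta)) = c \cdot (x^3+y^3) F^{e_0}(\eta) \in A\zeta_Q$, so $c F^{e_0}(\eta)$ lies in the kernel of multiplication by $x^3 + y^3$ on the quotient $H^2_m(A)/A\zeta_Q$ in the graded degree $2 - 2Q$. The lemma thus reduces to showing that this multiplication is injective in that degree. This injectivity is essentially what Brenner's deep theorem (Remark~2) would yield in one stroke via degree-$2$ test elements; in the self-contained setting it is the main obstacle I anticipate, and I expect it to be obtained by combining the explicit graded description of $H^2_m(A)$---each piece is finite-dimensional, spanned by classes $z^k/(x^i y^j)$ with $i, j \ge 1$ and $0 \le k \le 3$---with Lemma~\ref{lemma5.2}'s identity $(x^3+y^3)^{q-1} \theta^j \in L[x, y, \theta^q]$, which sharply controls the $(x^3+y^3)$-action across Frobenius iterates and is where the hypothesis for all $e$, rather than just $e = e_0$, may come into play.
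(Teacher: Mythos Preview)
Your local--cohomology translation is correct and matches the paper's setup: $H^{2}=A[\frac{1}{xy}]/(A[\frac{1}{x}]+A[\frac{1}{y}])$, the Frobenius action, and the equivalence $h\in I^{[n]}\Leftrightarrow [h/(x^{4n}y^{4n})]\in A\zeta_n$ are all fine. The reduction to the injectivity of multiplication by $x^{3}+y^{3}$ on $(H^{2}/A\zeta_Q)_{2-2Q}$ is also valid.

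The gap is precisely that injectivity statement, which you leave as ``the main obstacle'' with only a gesture toward Lemma~5.2. Notice what you are asking for: that whenever $(x^{3}+y^{3})h\in I^{[Q]}$ for a degree $6Q+2$ element $h$, already $h\in I^{[Q]}$. Applied to $h=cf^{Q}$, this says that the degree--$2$ element $c$ behaves as a test element for this particular $f\in I^{*}$ --- which is exactly what Remark~2 attributes to Brenner's deep theorem and what the paper is explicitly trying to \emph{avoid}. Moreover, your proposed injectivity is a statement about a space whose dimension grows linearly in $Q$; there is no finite calculation that settles it for all $Q$ at once, and Lemma~5.2 has already done its work in producing the test element $x^{3}+y^{3}$. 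Using only the hypothesis at $e=e_0$ cannot suffice, and you give no mechanism for bringing the other $e$ into play.

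The paper's route around this is the idea you are missing. Rather than prove injectivity in a $Q$--dependent degree, it uses the \emph{non}--containment $cf^{Q}\notin I^{[Q]}$ together with the perfect pairing on $T=A/(x^{4Q},y^{4Q})$ (Lemma~6.2) to manufacture a $w\in A_{2Q-1}$ with $I^{[Q]}\cdot[w/(x^{4Q}y^{4Q})]=0$ in $H^{2}$ and $f^{Q}w\notin(x^{4Q},y^{4Q})$ (Lemma~6.3). Setting $U=f^{Q}\cdot[w/(x^{4Q}y^{4Q})]$ lands one in $H^{2}_{-1}$, a \emph{fixed} six--dimensional space independent of $Q$. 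The test--element hypothesis then gives $(x^{3}+y^{3})U^{[q]}=0$ for all $q$ (Lemma~6.4), and a single explicit computation in that six--dimensional space (Lemma~6.5: $(x^{3}+y^{3})U^{[8]}\ne 0$ for any nonzero $U\in H^{2}_{-1}$) yields the contradiction. The duality step that trades the degree--$2$ element $c$ for a degree--$(2Q-1)$ annihilator $w$, thereby moving the problem into degree $-1$, is the crux you need to add.
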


Note that Lemma \ref{lemma6.1} and our earlier results provide the negative solution to the localization problem.  For we may argue as in Remark 1 following Definition \ref{def5.1}, using Theorem \ref{theorem4.10} and Lemma \ref{lemma6.1} to see that $f\not\in I^{*}$ in $R_{\alpha}$ when $\alpha$ is algebraic over $Z/2$.

Let $T$ be the graded $L$-algebra $A/(x^{4Q},y^{4Q})=L[x,y,z]/(x^{4Q},y^{4Q},g_{\alpha})$. We develop a few properties of $T$. Evidently 1, $z$, $z^{2}$ and $z^{3}$ form a basis of $T$ over $L[x,y]/(x^{4Q},y^{4Q})$. So an $L$-basis of $T$ consists of the $x^{i}y^{j}z^{k}$ with $i,j < 4Q$ and $k<4$. In particular, $T_{8Q+1}$ is 1-dimensional, spanned by $(xy)^{4Q-1}z^{3}$. Also, the subspace of $T$ annihilated by $x$ and $y$ is 4-dimensional, spanned by the $(xy)^{4Q-1}z^{k}$, $k=0,1,2,3$. It follows that an element of $T$ is annihilated by $x$, $y$ and $z$ if and only if it lies in $T_{8Q+1}$.

\begin{lemma}
\label{lemma6.2}
If $i+j=8Q+1$ the pairing $T_{i}\times T_{j}\rightarrow L$ induced by multiplication is non-degenerate.
\end{lemma}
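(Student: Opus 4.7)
The plan is to lean entirely on the structural fact stated in the paragraph immediately preceding Lemma \ref{lemma6.2}: an element of $T$ is annihilated by $x$, $y$ and $z$ if and only if it lies in the one-dimensional top piece $T_{8Q+1}$. Combined with the fact that $T$ is a finite-dimensional graded $L$-algebra concentrated in degrees $0,1,\ldots,8Q+1$, this is exactly the ``socle in one degree'' condition that forces Macaulay-style duality.

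First I would prove the left non-degeneracy. Fix a non-zero homogeneous $u\in T_i$, and consider the graded $L$-subspace $uT\subset T$. Since $u\cdot 1 = u\ne 0$, $uT$ is non-zero, and because $T$ is finite dimensional, there is a largest integer $d$ with $(uT)_d\ne 0$; pick a homogeneous $v$ of degree $d-i$ with $uv\ne 0$. For each of the generators $x,y,z$ of the graded maximal ideal, the product $(uv)\cdot x$ lies in $(uT)_{d+1}$, which is zero by maximality of $d$; likewise for $y$ and $z$. Hence $uv$ is annihilated by $x$, $y$ and $z$, and the cited fact forces $uv\in T_{8Q+1}$. This gives $d=8Q+1$, so $v$ lies in $T_{8Q+1-i}=T_j$ and pairs non-trivially with $u$.

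The right non-degeneracy is obtained by running the same argument with the roles of $i$ and $j$ swapped: a non-zero $v\in T_j$ produces a homogeneous $u$ with $uv\ne 0$ and, by the socle argument again, that $u$ must sit in degree $8Q+1-j=i$. Together these give the non-degenerate pairing.

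I do not foresee a serious obstacle; the only thing to double-check is that the top degree of $T$ really is $8Q+1$, which is immediate from the basis $x^iy^jz^k$ with $i,j<4Q$ and $k<4$ already recorded before the lemma. Essentially the proof is the standard observation that a non-negatively graded Artinian algebra with one-dimensional socle concentrated in its top degree is Gorenstein, with duality realised by the multiplication pairing into the socle.
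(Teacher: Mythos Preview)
Your argument is correct. Both you and the paper lean on the same structural fact---that the socle of $T$ (the elements killed by $x$, $y$, $z$) sits entirely in the one-dimensional top piece $T_{8Q+1}$---but you organise the proof differently. The paper runs a short downward induction on $j$: if $u\in T_i$ kills $T_j$ then $xu,yu,zu\in T_{i+1}$ kill $T_{j-1}$, so by induction $xu=yu=zu=0$, whence $u\in T_{8Q+1}$ and thus $u=0$ since $i<8Q+1$. Your route is the direct ``maximality'' argument: push $uT$ up as far as it goes and observe that the top non-zero piece must land in the socle. Both are one-paragraph proofs of the standard fact that a graded Artinian algebra with one-dimensional socle in its top degree has non-degenerate multiplication pairing; the paper's version avoids introducing the auxiliary subspace $uT$, while yours makes the Gorenstein-duality content of the lemma more visible.
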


\begin{proof}
We show the left kernel is $(0)$, arguing by induction on $j$. The case $i=8Q+1, j=0$ is trivial. Suppose $i<8Q+1$ and $u\in T_{i}$ annihilates $T_{j}$. Then $xu$, $yu$ and $zu$ annihilate $T_{j-1}$. By induction $xu$, $yu$ and $zu$ are 0, and since $i<8Q+1$, $u=0$.
\qed
\end{proof}

For the rest of the section we fix $Q$ with $cf^{Q}\not\in I^{[Q]}$. We shall assume that $f\in I^{*}$ and get a contradiction.

\begin{lemma}
\label{lemma6.3}
There exists a $w$ in $A_{2Q-1}$ with:

\begin{enumerate}
\item[(1)] $z^{4Q}w\in (x^{4Q},y^{4Q})$
\item[(2)] $f^{Q}w\not\in (x^{4Q},y^{4Q})$
\end{enumerate}
\end{lemma}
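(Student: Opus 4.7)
\begin{proofsketch}
My plan is to work in the finite-dimensional graded quotient $T = A/(x^{4Q},y^{4Q})$ introduced above, and recast both conditions through the non-degenerate pairings of Lemma \ref{lemma6.2}. The two requirements on $w\in A_{2Q-1}$ become conditions on its image $\bar{w}\in T_{2Q-1}$: namely $z^{4Q}\bar{w}=0$ in $T_{6Q-1}$ and $f^{Q}\bar{w}\ne 0$ in $T_{8Q-1}$. Writing $\mu_{z}\colon T_{2Q-1}\to T_{6Q-1}$ and $\mu_{f}\colon T_{2Q-1}\to T_{8Q-1}$ for the respective multiplication maps, this amounts to proving $\ker(\mu_{z})\not\subset\ker(\mu_{f})$.

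To identify these two kernels I would exploit the non-degenerate pairing $T_{2Q-1}\times T_{6Q+2}\to T_{8Q+1}\cong L$ (the degrees match, since $(2Q-1)+(6Q+2)=8Q+1$). By associativity of multiplication, $z^{4Q}\bar{w}=0$ iff $\bar{w}\cdot(z^{4Q}v)=0$ for every $v\in T_{2Q+2}$, and $f^{Q}\bar{w}=0$ iff $\bar{w}\cdot(f^{Q}c')=0$ for every $c'\in T_{2}$. This gives
\[
\ker(\mu_{z}) = \bigl(z^{4Q}T_{2Q+2}\bigr)^{\perp}, \qquad \ker(\mu_{f}) = \bigl(f^{Q}T_{2}\bigr)^{\perp},
\]
with perps taken inside $T_{2Q-1}$ via the pairing. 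Since the pairing is non-degenerate, taking perps is an inclusion-reversing bijection on subspaces of $T_{6Q+2}$, so $\ker(\mu_{z})\not\subset\ker(\mu_{f})$ is equivalent to $f^{Q}T_{2}\not\subset z^{4Q}T_{2Q+2}$ inside $T_{6Q+2}$.

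To finish, the image of $cf^{Q}\in A_{6Q+2}$ in $T_{6Q+2}$ lies in $f^{Q}T_{2}$, and belongs to $z^{4Q}T_{2Q+2}$ exactly when $cf^{Q}\in(x^{4Q},y^{4Q},z^{4Q})=I^{[Q]}$ in $A$; this fails by the standing hypothesis of the section. Hence $f^{Q}T_{2}\not\subset z^{4Q}T_{2Q+2}$, producing an element $\bar{w}\in\ker(\mu_{z})\setminus\ker(\mu_{f})$, any homogeneous lift of which to $A_{2Q-1}$ satisfies (1) and (2). The main hazards are bookkeeping: verifying that the three relevant pairings really land in the top degree $T_{8Q+1}$, and correctly translating ``lies in $z^{4Q}T_{2Q+2}$'' back to ``lies in $I^{[Q]}$'' in $A$. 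Notice that $f\in I^{*}$ plays no role in this lemma; that hypothesis only enters later when the overall contradiction is assembled.
\end{proofsketch}
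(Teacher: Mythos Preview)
Your proof is correct and is essentially the same duality argument as the paper's. The paper phrases it a touch more economically: it notes that multiplication by $z^{4Q}$ gives dual maps $T_{2Q+2}\to T_{6Q+2}$ and $T_{2Q-1}\to T_{6Q-1}$, observes that $cf^{Q}$ is not in the image of the first (exactly your $cf^{Q}\notin z^{4Q}T_{2Q+2}$), and then directly picks $w$ in the kernel of the second with $w\cdot cf^{Q}\ne 0$, which immediately forces $wf^{Q}\ne 0$---so there is no need to separately identify $\ker(\mu_{f})$ as $(f^{Q}T_{2})^{\perp}$.
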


\begin{proof}
Multiplication by $z^{4Q}$ induces maps $T_{2Q+2}\rightarrow T_{6Q+2}$ and $T_{2Q-1}\rightarrow T_{6Q-1}$. These maps are dual under the pairings of Lemma \ref{lemma6.2}. Since $cf^{Q}\not\in I^{[Q]}$ in $A$, $cf^{Q}$ is not in the image of the first map. So there is a $w$ in the kernel of the second map with $wcf^{Q}\ne 0$ in $T$. Thinking of $w$ as an element of $A_{2Q-1}$ we find that $f^{Q}w\not\in (x^{4Q},y^{4Q})$.  Since $w\rightarrow 0$ in $T_{6Q-1}$, $z^{4Q}w\in (x^{4Q},y^{4Q})$.
\qed
\end{proof}

Now let $K$ be the field of fractions of $A$. $A\left[\frac{1}{xy}\right]$, $A\left[\frac{1}{x}\right]$ and $A\left[\frac{1}{y}\right]$ are $A$-submodules of $K$. Let $H^{2}$ be the quotient module $A\left[\frac{1}{xy}\right]/\left(A\left[\frac{1}{x}\right]+A\left[\frac{1}{y}\right]\,\right)$. ($H^{2}$ is a local cohomology module but we won't use any machinery from that theory.) Note that $H^{2}$ is $Z$-graded; when $u$ is in $A_{l}$, $\frac{u}{x^{i}y^{j}}$ has degree $l-i-j$. Using the fact that 1, $z$, $z^{2}$ and $z^{3}$ are a basis of $A$ over $L[x,y]$ we find:

\begin{enumerate}
\item[(1)] $\frac{1}{x^{i}y^{j}}$, $\frac{z}{x^{i}y^{j}}$, $\frac{z^{2}}{x^{i}y^{j}}$ and $\frac{z^{3}}{x^{i}y^{j}}$, $i,j>0$ are an $L$-basis of $H^{2}$.
\item[(2)] $\frac{u}{x^{i}y^{j}}$ is 0 in $H^{2}$ if and only if $u \in (x^{i},y^{j})$ in $A$.
\end{enumerate}

The map $u\rightarrow u^{2}$, $K\rightarrow K$ stabilizes $A\left[\frac{1}{xy}\right]$, $A\left[\frac{1}{x}\right]$ and $A\left[\frac{1}{y}\right]$ and so induces an additive function $\Phi: H^{2}\rightarrow H^{2}$. $\Phi(H^{2}_{l})\subset H^{2}_{2l}$; furthermore $\Phi(aU)=a^{2}\Phi(U)$. If $q=2^{n}$ we abbreviate $\Phi^{n}(U)$ to $U^{[q]}$.

\begin{lemma}
\label{lemma6.4}
There is a $U$ in $H^{2}_{-1}$, $U\ne 0$, such that $(x^{3}+y^{3})\cdot U^{[q]}=0$ for all $q$.
\end{lemma}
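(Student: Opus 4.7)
The plan is to exhibit $U$ explicitly by combining the element $w$ from Lemma~\ref{lemma6.3} with a power of $f$. Specifically, I would set
\[
U \;=\; \frac{f^{Q} w}{x^{4Q} y^{4Q}} \;\in\; H^{2}.
\]
The intuition is that $f^{Q}$ is there so we can invoke the test-element property from Lemma~\ref{lemma5.4}, while $w$ supplies the $z^{4Q}$-torsion that we need to kill the $z$-contribution after using the test element.

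Checking that $U\in H^{2}_{-1}$ is immediate from the grading conventions: the numerator lies in $A_{8Q-1}$ and the denominator contributes $-8Q$. That $U\ne 0$ in $H^{2}$ is equivalent, by the characterization of zero elements in $H^{2}$, to $f^{Q}w\notin(x^{4Q},y^{4Q})$, which is precisely property $(2)$ of Lemma~\ref{lemma6.3}.

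The main content is verifying $(x^{3}+y^{3})\,U^{[q]}=0$ for every $q$. Since $\Phi$ acts by squaring, $U^{[q]}=f^{qQ}w^{q}/(x^{4qQ}y^{4qQ})$, so it suffices to show $(x^{3}+y^{3})\,f^{qQ}w^{q}\in(x^{4qQ},y^{4qQ})$ in $A$. I would combine two ingredients. First, by Lemma~\ref{lemma5.4} the element $x^{3}+y^{3}$ is a test element, and by our standing assumption $f\in I^{*}$; hence
\[
(x^{3}+y^{3})\,f^{qQ}\;=\;x^{4qQ}a+y^{4qQ}b+z^{4qQ}c
\]
for some $a,b,c\in A$. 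Second, by property $(1)$ of Lemma~\ref{lemma6.3}, $z^{4Q}w=x^{4Q}u+y^{4Q}v$; raising to the $q$-th power and using the freshman's dream in characteristic $2$ for the power-of-$2$ exponent $q$ gives $z^{4qQ}w^{q}=x^{4qQ}u^{q}+y^{4qQ}v^{q}\in(x^{4qQ},y^{4qQ})$. Multiplying the first displayed equation by $w^{q}$, the $x^{4qQ}$ and $y^{4qQ}$ terms are visibly in $(x^{4qQ},y^{4qQ})$, and the remaining term $c\cdot(z^{4qQ}w^{q})$ lies there by the second ingredient. Hence $(x^{3}+y^{3})\,U^{[q]}=0$ in $H^{2}$.

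The only real obstacle is picking the right $U$: one needs to see that $f^{Q}$ and $w$ must be combined, because neither by itself does the job---$f^{Q}/(x^{4Q}y^{4Q})$ does not even live in degree $-1$, and $w/(\text{denominator})$ cannot be handled without invoking a test element on $f$. Once the construction is in hand, the verification is a routine matching of the two properties of $w$ with the two inputs (tight closure and the test-element theorem).
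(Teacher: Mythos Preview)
Your argument is correct and is essentially the paper's own proof. The paper sets $W=\dfrac{w}{x^{4Q}y^{4Q}}$ and $U=f^{Q}W$, notes $I^{[Q]}\cdot W=0$, applies $\Phi$ to get $I^{[qQ]}\cdot W^{[q]}=0$, and then uses $(x^{3}+y^{3})f^{qQ}\in I^{[qQ]}$ together with $U^{[q]}=f^{qQ}W^{[q]}$; you carry out the same steps, only unwinding the $H^{2}$-module statements into explicit ideal memberships in $A$ (in particular, your ``raise $z^{4Q}w\in(x^{4Q},y^{4Q})$ to the $q$-th power'' is exactly what underlies the paper's ``apply $\Phi$ repeatedly'').
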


\begin{proof}
Take $w$ as in Lemma \ref{lemma6.3} and let $W$ be the element $\frac{w}{x^{4Q}y^{4Q}}$ of $H^{2}$; set $U=f^{Q}W$.  The degree of $U$ is $(2Q-1)-8Q+6Q=-1$. Since $f^{Q}w\not\in (x^{4Q},y^{4Q})$, $U\ne  0$. Since $x^{4Q}w$, $y^{4Q}w$ and $z^{4Q}w$ are all in $(x^{4Q},y^{4Q})$, $I^{[Q]}\cdot W=(0)$. Applying $\Phi$ repeatedly we find that $I^{[qQ]}W^{[q]}=(0)$.

We are assuming that $f\in I^{*}$. Since $x^{3}+y^{3}$ is a test element in $A$, $(x^{3}+y^{3})f^{qQ}\in I^{[qQ]}$. So $(x^{3}+y^{3})f^{qQ}W^{[q]}=0$. But $f^{qQ}W^{[q]}=U^{[q]}$.
\qed
\end{proof}

\begin{lemma}
\label{lemma6.5}
Suppose $\alpha \ne 1$ and $U$ is a non-zero element of $H^{2}_{-1}$. Then $(x^{3}+y^{3})U^{[8]}\ne 0$.
\end{lemma}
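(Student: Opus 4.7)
\begin{proofsketch}
The plan is to establish injectivity of $\Psi\colon H^2_{-1}\to H^2_{-5}$, $U\mapsto(x^3+y^3)U^{[8]}$. Since $\Psi(cU)=c^8\Psi(U)$ and $L$ is perfect, this is equivalent to $L$-linear independence of the images $\Psi(e_i)$ of a basis of $H^2_{-1}$, which I take to be $e_1=\frac{z}{xy}$, $e_2=\frac{z^2}{xy^2}$, $e_3=\frac{z^2}{x^2y}$, $e_4=\frac{z^3}{xy^3}$, $e_5=\frac{z^3}{x^2y^2}$, $e_6=\frac{z^3}{x^3y}$. On such a monomial, iterated Frobenius acts simply: $\Phi^3(\frac{z^k}{x^ay^b})=\frac{z^{8k}}{x^{8a}y^{8b}}$ in $A[\frac{1}{xy}]$; to get the image in $H^2$ I reduce $z^{8k}$ modulo $g_\alpha$ and discard any resulting monomial $\frac{u}{x^iy^j}$ with $i\le 0$ or $j\le 0$ (such a fraction lies in $A[\frac{1}{x}]+A[\frac{1}{y}]$ and vanishes in $H^2$).

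The core ingredient is obtained by squaring $z^4=xyz^2+(x^3+y^3)z+\alpha x^2y^2$ in characteristic $2$:
\[
z^8\equiv (x^3y^3+x^6+y^6)z^2+x^2y^2(x^3+y^3)z+\alpha(1+\alpha)x^4y^4\pmod{g_\alpha}.
\]
The constant-in-$z$ term $\alpha(1+\alpha)x^4y^4$ is non-zero precisely when $\alpha\notin\{0,1\}$, so the hypothesis $\alpha\ne 1$ (together with $\alpha\ne 0$) is exactly what keeps it alive. Substituting into $\Phi^3(e_1)=\frac{z^8}{x^8y^8}$, discarding monomials that vanish in $H^2$, and then multiplying by $x^3+y^3$, every $z^2$- and $z^1$-term pairs off in characteristic $2$, leaving $\Psi(e_1)=\alpha(1+\alpha)(\frac{1}{xy^4}+\frac{1}{x^4y})$.

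For the other $e_i$ the same recipe applies using the reductions of $z^{16}$ and $z^{24}$ modulo $g_\alpha$ (obtained by further squaring the identity above). Carrying out the bookkeeping gives $\Psi(e_2)=\alpha\frac{1}{x^3y^2}$, $\Psi(e_3)=\alpha\frac{1}{x^2y^3}$,
\[
\Psi(e_4)=\alpha(1+\alpha)\frac{z^2}{x^3y^4}+\alpha\frac{z^2}{x^6y}+\alpha^2\frac{z}{x^4y^2}+\alpha(1+\alpha+\alpha^2)\frac{1}{x^2y^3},
\]
with $\Psi(e_6)$ from $\Psi(e_4)$ by the $x\leftrightarrow y$ swap, and $\Psi(e_5)$ a similar sum carrying non-zero $z^3$-content (which survives because $e_5$ already has $z^3$-content and the $z$-grading of numerators is preserved by $\Phi$ and by multiplication by $x^3+y^3$).

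Linear independence of the six images is then extracted by inspection: the coefficient of $\frac{1}{xy^4}$ in $\sum c_i\Psi(e_i)$ comes only from $\Psi(e_1)$ and equals $c_1\alpha(1+\alpha)$, forcing $c_1=0$; the unique $\frac{z^2}{x^3y^4}$ and $\frac{z^2}{x^4y^3}$ coefficients force $c_4=c_6=0$; a $z^3$-coefficient forces $c_5=0$; and $c_2,c_3$ are then pinned down by the $\frac{1}{x^3y^2},\frac{1}{x^2y^3}$ coefficients. All pinning coefficients are non-zero under $\alpha\ne 0,1$. The main obstacle is the mechanical bookkeeping in the reductions of $z^{8k}$ modulo $g_\alpha$, with careful attention to which monomials drop out in $H^2$; conceptually, the entire argument reduces to preserving the single coefficient $\alpha(1+\alpha)$ from the identity above through all of the characteristic-$2$ cancellations.
\end{proofsketch}
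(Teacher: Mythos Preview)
Your approach is exactly the ``more elaborate calculation'' the paper's own proof sketch offers as one of two options for handling the full six-dimensional $H^2_{-1}$: compute $\Psi(e_i)=(x^3+y^3)e_i^{[8]}$ on each basis element and check linear independence. Your computations of $\Psi(e_1),\Psi(e_2),\Psi(e_3)$ match the paper's. The paper itself only carries out these three and then punts on $e_4,e_5,e_6$, suggesting either the calculation you attempt or a reduction via the automorphisms $\sigma,\tau,\rho$ of Theorem~\ref{theorem3.13}.

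There is, however, a genuine error in your treatment of $\Psi(e_5)$. The assertion that ``the $z$-grading of numerators is preserved by $\Phi$'' is false: applying $\Phi^3$ sends $z^3$ to $z^{24}$, and to read this off in the $H^2$ basis you must reduce modulo $g_\alpha$, which destroys any $z$-grading. In fact the $z^3$-content of $\Psi(e_5)$ vanishes. Writing $z^8=Az^2+Bz+C$ with $A=x^6+x^3y^3+y^6$, $B=x^2y^2(x^3+y^3)$, $C=\alpha(1+\alpha)x^4y^4$, one finds that the $z^3$-coefficient of $z^{24}$ in $A=R_\alpha$ is $\sum_{i=0}^{7}x^{3i}y^{21-3i}$, so the $z^3$-coefficient of $(x^3+y^3)z^{24}$ telescopes to $x^{24}+y^{24}$. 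Dividing by $x^{16}y^{16}$ gives $\frac{x^8z^3}{y^{16}}+\frac{y^8z^3}{x^{16}}$, both of which lie in $A[\tfrac{1}{y}]+A[\tfrac{1}{x}]$ and hence vanish in $H^2$. Thus $\Psi(e_5)$ has \emph{no} $z^3$-term, and your step ``a $z^3$-coefficient forces $c_5=0$'' collapses.

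The overall strategy is sound and the lemma is true, but the linear independence of the six $\Psi(e_i)$ has to be extracted from a correct computation of $\Psi(e_5)$ (which, being $x\leftrightarrow y$ symmetric, will have terms overlapping both $\Psi(e_4)$ and $\Psi(e_6)$). Alternatively, you could follow the paper's second suggestion: the automorphisms $\sigma,\tau,\rho$ fix $g_\alpha$ and $x^3+y^3$ and commute with $\Phi$, so they commute with $\Psi$; using them one reduces to a nonzero $U$ in the span of $e_1,e_2,e_3$, where your (correct) three-element computation already suffices.
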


\begin{proofsketch}
Since $U$ has degree $-1$ it is an $L$-linear combination of $\frac{z}{xy}$, $\frac{z^{2}}{x^{2}y}$, $\frac{z^{2}}{xy^{2}}$, $\frac{z^{3}}{x^{3}y}$, $\frac{z^{3}}{x^{2}y^{2}}$ and $\frac{z^{3}}{xy^{3}}$. I'll assume first that $U$ is an $L$-linear combination of $\frac{z}{xy}$, $\frac{z^{2}}{x^{2}y}$ and $\frac{z^{2}}{xy^{2}}$. We know from Lemma \ref{lemma2.2} that $(x^{3}+y^{3})z^{4Q}\equiv (x^{3}+y^{3})\sum_{rs=Q}(C_{r}P)^{s}\bmod{(x^{4Q},y^{4Q})}$ in the polynomial ring $L[x,y,z]$. So, $\bmod{(x^{4Q},y^{4Q},g_{\alpha})}$, $(x^{3}+y^{3})z^{4Q}\equiv (x^{3}+y^{3})\sum_{rs=Q}(C_{r}\alpha x^{2}y^{2})^{s}$. Taking $Q=2$ we get:
$$(x^{3}+y^{3})\left(\frac{z}{xy}\right)^{[8]}=\frac{x^{3}+y^{3}}{x^{8}y^{8}}(\alpha^{2}x^{4}y^{4}+\alpha x^{4}y^{4})=(\alpha^{2}+\alpha)\left(\frac{1}{x^{4}y}+\frac{1}{xy^{4}}\right)$$
Taking $Q=4$ we get:
$$(x^{3}+y^{3})\left(\frac{z^{2}}{x^{2}y}\right)^{[8]}=\frac{x^{3}+y^{3}}{x^{16}y^{8}}(\alpha^{4}x^{8}y^{8}+\alpha^{2}x^{8}y^{8}+\alpha(x^{14}y^{2}+x^{8}y^{8}+x^{2}y^{14}))$$
So 
$$(x^{3}+y^{3})\left(\frac{z^{2}}{x^{2}y}\right)^{[8]}=\frac{x^{3}+y^{3}}{x^{16}y^{8}}\cdot\alpha x^{14}y^{2}=\frac{\alpha}{x^{2}y^{3}}$$
Similarly, 
$$(x^{3}+y^{3})\left(\frac{z^{2}}{xy^{2}}\right)^{[8]}=\frac{\alpha}{x^{3}y^{2}}$$
Since $\alpha^{2}+\alpha \ne 0$, and no $L$-linear combination of $\frac{1}{x^{4}y}+\frac{1}{xy^{4}}$, $\frac{1}{x^{2}y^{3}}$ and $\frac{1}{x^{3}y^{2}}$ can be 0, we're done. When $U$ is an $L$-linear combination of all 6 basis elements of $H^{2}_{-1}$, one may proceed by making a similar but more elaborate calculation. Alternatively one may use the automorphisms $\sigma$, $\tau$ and $\rho$ of Theorem \ref{theorem3.13}, which act on $H^{2}$, to construct a non-zero $V$ in $H^{2}_{-1}$, with $V^{[8]}=0$, which is an $L$-linear combination of $\frac{z}{xy}$, $\frac{z^{2}}{x^{2}y}$ and $\frac{z^{2}}{xy^{2}}$. We leave details to the reader. 
\end{proofsketch}

We can now complete the proof of Lemma \ref{lemma6.1}; we cannot simultaneously have $f\in I^{*}$ and $cf^{Q}\not\in I^{[Q]}$ for some $Q$. If $\alpha \ne 1$ this follows from Lemmas \ref{lemma6.4} and \ref{lemma6.5}. If $\alpha = 1$ we modify the proof of Lemma \ref{lemma6.5} to show that $(x^{3}+y^{3})U^{[16]}\ne 0$, once again contradicting Lemma \ref{lemma6.4}.

%%%%%%%%%
% The Appendices part is started with the command \appendix;
% appendix sections are then done as normal sections
% \appendix

% \section{}
% \label{}

\end{document}